\newcounter{teoremaganso}
\newtheorem {bigtheo} [teoremaganso] {Theorem}
\newtheorem{lema}{Lemma}[section]
\newtheorem{theo}[lema]{Theorem}
\newtheorem{prop}[lema]{Proposition}
\newtheorem{defi}[lema]{Definition}
\newtheorem{rema}[lema]{Remark}
\newtheorem{exam}[lema]{Example}
\newtheorem{problem}[lema]{Problem}
\newtheorem{hyp}[lema]{Hypothesis}
\def\sideremark#1{\ifvmode\leavevmode\fi\vadjust{\vbox to0pt{\vss % the remark
      \hbox to 0pt{\hskip\hsize\hskip1em           %                will appear only
 \vbox{\hsize2cm\tiny\raggedright\pretolerance10000%                on the side
 \noindent #1\hfill}\hss}\vbox to8pt{\vfil}\vss}}}%
\newcommand{\R}{\mathbb{R}}
\newcommand{\C}{\mathbb{C}}
\newcommand{\Q}{\mathbb{Q}}
\newcommand{\Z}{\mathbb{Z}}
\title{Inductive solution of the \\
tangential center problem on zero-cycles}\author{A. \'Alvarez, J.L. Bravo, P. Marde\v si\'c}
\date{07/12/12}
\begin{document}

\begin{abstract}

Given a polynomial $f\in\C[z]$ of degree $m$, let $z_1(t),\ldots,z_m(t)$ 
denote all algebraic functions 
defined by $f(z_k(t))=t$. Given integers $n_1,\ldots,n_m$ such that $n_1+\ldots+n_m=0$, 
the tangential center problem on zero-cycles asks to
find all polynomials $g\in\C[z]$ such that $n_1g(z_1(t))+\ldots+n_mg(z_m(t))\equiv 0$. 
The classical Center-Focus Problem, or rather its
tangential version in important non-trivial planar systems lead to the above problem.

The tangential center problem on zero-cycles was  recently solved in a preprint by Gavrilov and Pakovich \cite{GP}. 

Here we give an alternative solution based on 
induction on the number of composition factors of $f$ under a generic hypothesis on $f$.
First we show the uniqueness of decompositions $f=f_1\circ\ldots\circ f_d$, such that every $f_k$
is $2$-transitive, monomial or a Chebyshev polynomial under the assumption that 
in the above composition there is no merging of critical values.  

Under this assumption, we give a complete (inductive) solution of the tangential center problem
on zero-cycles.
The inductive solution is obtained through three mechanisms: composition, primality and
vanishing of the Newton-Girard component on projected cycles. 
\end{abstract}

\subjclass[2010]{34C07, 34C08, 34M35, 14K20}

\keywords{Abelian integrals, tangential center problem, center-focus problem, moment problem}

\thanks{
The first author was partially supported by Junta de Extremadura
and FEDER funds. The second author was partially supported by
Junta de Extremadura and a MCYT/FEDER grant number MTM2008-05460.
The first two authors are grateful to the Universit\'{e} de
Bourgogne, for the hospitality and support during the visit when
this work was started. The third author thanks the Universidad de
Extremadura for the hospitality and support during the visit. }

\maketitle

\section{Introduction}\label{Section:1}
A classical problem in planar polynomial vector fields starting from Poincar\'e is the \emph{center-focus problem}. 
The problem asks for the determination of mechanisms leading to the creation of a center (a singularity surrounded 
by a continuous family of closed orbits) rather than  a focus (singularity attracting or repelling all nearby trajectories). 
The problem has not yet been solved in a satisfactory way except for quadratic vector fields \cite{D}, \cite{K}. 

The \emph{center-focus problem} has its \emph{infinitesimal version}:
Starting from a vector field $X_0$ having a center, find all perturbations $X_\lambda$ preserving the center.
More generally, given a vector field $X_0$ having a continuous family $\gamma_0(t)$ of closed orbits, 
determine the deformations $X_\lambda$ of $X_0$ preserving these closed orbits. That is, we ask  
the deformed  family $\gamma_\lambda(t)$  of orbits of $X_\lambda$ to be \emph{closed}, too. 

Taking a parametrized transversal $T$ to the family of closed curves $\gamma(t)$, one defines the \emph{displacement map} $\Delta$
 as the first return map minus identity along trajectories of $X_\lambda$. Then, a continuous family of closed curves is preserved, if the displacement map of the deformation along the chosen family of closed curves is identically zero.

The most popular family of polynomial systems having a continuous family of closed orbits is the family of Hamiltonian 
vector fields $X_F=-\frac{\partial F}{\partial y}\frac{\partial}{\partial x}+\frac{\partial F}{\partial x}\frac{\partial}{\partial y}$. 
The trajectories of $X_F$ lie in the level curves of the Hamiltonian. Consider their deformations $X_F+\epsilon Y$.
Taking a transversal $T$ to a family $\gamma(t)$ of closed curves of the Hamiltonian vector field $X_F$ parametrized by $F$, 
the displacement function of the above deformation is of the form

$$
\Delta(t,\epsilon)=-\epsilon\int_{\gamma(t)}\omega_Y+o(\epsilon),
$$
 where $\omega_Y$ is the dual form to the vector field $Y$ and $o(\epsilon)$
is a function depending on $t$, but tending to zero faster than $\epsilon$, for $\epsilon\to0$.
The function $t\mapsto I(t)=\int_{\gamma(t)}\omega_Y$, $\gamma(t)\subset F^{-1}(t)$, is an \emph{abelian integral}.

A necessary condition for having a solution of the infinitesimal center problem is the  
vanishing of its first-order term given by the abelian integral $I(t)$.

This motivates the \emph{tangential (or first-order) center problem}. We formulate it in its complex form.

\begin{problem} {\bf Tangential center problem.} Given a polynomial $F\in \C[x,y]$ and a continuous family $\gamma(t)$ of cycles belonging to the first homology group $H_1(F^{-1}(t)),$ find all polynomial forms $\omega$ such that the abelian integral 
\begin{equation}\label{ab2}
I(t)=\int_{\gamma(t)}\omega
\end{equation}
 vanishes identically.
\end{problem}

The problem was solved by Ilyashenko \cite{I} under a genericity assumption on $F$.
He proves that under a genericity condition on $F$, for any family of cycles $\gamma(t)\in H_1(F^{-1}(t))$,
 the abelian integral \eqref{ab2}  vanishes identically if and only if the form $\omega$ is relatively exact, i.e., of the form $\omega=GdF+dR$, for some $G,R\in\C[x,y]$.

Without the genericity hypothesis on $F$, the claim is false and the tangential center problem is open. 
One important non-generic case is the \emph{hyper-elliptic case} when $F(x,y)=y^2+f(x)$, $f\in\C[x]$. 
For hyper-elliptic $F$, the problem was solved by Christopher and the third author \cite{CM} under the
 hypothesis that the family of cycles $\gamma(t)$ is a family of vanishing cycles.
 The tangential center problem is open even in the hyper-elliptic case for general family $\gamma(t)$ of cycles.

Let us note a related problem studied by Gavrilov \cite{G} and Bonnet and Dimca \cite{BD}.

\begin{problem}{\bf Integrability problem.}  Given a polynomial $F\in \C[x,y]$, find all polynomial forms $\omega$ such that the abelian integral $I(t)=\int_{\gamma(t)}\omega$ vanishes identically along \emph{any} cycle $\gamma(t)\in H_1(F^{-1}(t))$.
\end{problem}
Note the difference: in the Integrability problem one asks for the vanishing of abelian integrals along \emph{all} cycles and not just one family of cycles as in the  Tangential center problem.
Under generic hypothesis on $F$, Ilyashenko \cite{I} proves that 
by monodromy one family of cycles generates all cycles. Hence 
the vanishing of abelian integrals along one family of cycles implies the vanishing along all cycles and the two problems coincide. 

In general it is false. It is easy to see that there are solutions of the tangential center problem due to the presence of a symmetry on a family of cycles. 
This solution will not necessarily be a solution for another family of cycles not respecting the symmetry.
The form is hence not relatively exact as
Abelian integrals of relatively exact forms vanish along any cycle of $\gamma(t)$.

This paper is dedicated to the study of the tangential center problem on zero-cycles:

\begin{problem}\label{tang}{\bf Tangential center problem on zero-cycles.} Given a polynomial 
$f\in\C[x]$ and a family of zero-cycles $C(t)$ of $f$, determine all functions $g\in\C[x]$ such that 
$\int_{C(t)}g$ vanishes identically. 
\end{problem}

Here, integration is just calculation of the value of a function at some points $z_i(t)$ determining the cycle $C(t)=\sum n_i z_i(t)$ (for more details see the next section).
Note that in this case abelian integrals on zero-cycles are in fact algebraic functions, so the problem is certainly easier 
than the initial problem. 

Nevertheless, in \cite{ABM} , we showed  that the tangential center problem for 
the hyper-elliptic case is directly related to the above
tangential center problem on zero-cycles for hyper-elliptic abelian integrals 
(see also \cite{GP}).

In \cite{ABM} we introduced the classes of \emph{balanced and unbalanced cycles}. 
We solved the  tangential center problem on zero-cycles by induction under the hypothesis that the 
initial cycle is unbalanced and that in the inductive process one encounters only unbalanced cycles. 
We called such a cycle \emph{totally unbalanced}.
The main result from \cite{ABM} can be resumed by saying that under the hypothesis that the 
cycle $C(t)$ is totally unbalanced, 
the only mechanism producing tangential centers is a composition (i.e. symmetry) 
mechanism or a sum of composition mechanisms. 
Our proof was based on results of Pakovich-Muzychuk \cite{PM} relative to the composition 
conjecture in the moment problem for the Abel equation. 
In particular we used Pakovich-Muzychuk's characterization of invariant irreducible spaces.
In this paper we deal with the remaining case of balanced cycles or cycles leading to balanced 
cycles in the induction process. 

While we were preparing the present paper, the preprint \cite{GP} of Gavrilov and Pakovich 
appeared, solving the general  tangential center problem on zero-cycles. 

Their solution is based on three steps:

\begin{description}
\item[ (i)] Description of all possible irreducible subspaces of $\Q^m$ invariant by the monodromy $G_f$ of $f$.
\item[(ii)] Given a cycle $\delta(t)=\sum_{i=1}^m n_i z_i(t)$ characterized by $(n_1,\ldots,n_m)\in \Z^m$, provide a method 
allowing to decompose the invariant space generated by the action of monodromy on $(n_1,\ldots,n_m)$ in a 
direct sum of irreducible $G_f$-invariant subspaces.
\item[(iii)]
For a given irreducible $G_f$-invariant subspace $V$, describe the space $Z_V$ consisting of polynomials $g$ such that $\int_{\delta(t)}g\equiv0$, for all $\delta(t)\in V$.
\end{description}

Our approach is different.
We solve  the tangential center problem on zero-cycles by induction on the number of composition factors of $f$.  Theorem A together with Theorem 2.3 of \cite{ABM} gives the basis of induction and Theorems B and C give the induction step. The solution is given  for arbitrary cycles, but under a generic hypothesis on the polynomial $f$. 
We think that our inductive approach sheds new light to the complicated structure of the space of solutions of the tangential center problem on zero-cyles. In particular it isolates three mechanisms  leading to a tangential centers for zero-cycles: 
composition, primality and
vanishing of the Newton-Girard component on projected cycles. We illustrate the complex structure of the solution of the tangential center problem by some examples. 

First let us explain the hypothesis under which our study is done.
It follows from the Burnside-Schur theorem that primitive (i.e., undecomposable) composition factors of a polynomial $f$ are of one of the following three types: $2$-transitive, linearly equivalent to a monomial $z^k$ or a Chebyshev polynomial $T_k$, with $k$ prime.
Note that a composition of monomials is a monomial and similarly a composition of Chebyshev polynomials is a Chebyshev polynomial. 
%The same holds if functions are linearly equivalent to Chebyshev functions  (by the same scaling). 

Given a polynomial $f$ let $f=f_0\circ\cdots\circ f_d$ be a decomposition of the polynomial $f$ in its composition factors, which are $2$-transitive, Chebyshev or monomial \emph{not necessarily of prime degree}.
 
We completely solve the tangential center problem on zero-cycles when for every $k=1,\ldots,d$ 
the critical values of $f_0\circ\cdots\circ f_{k-1}$ and $f_{k}$ do not 
merge in the composition $f_0\circ\cdots\circ f_k$ (see Definition \ref{dontmerge}).
\begin{comment}
the 
decomposition $f=f_1\circ\cdots\cdots f_k$ verifies the
following hypotheses:
\begin{enumerate}
 \item The images by $f_0\circ\ldots\circ f_{k}$ of the zeros of $(f_0\circ\ldots\circ f_{k-1})'(f_k(z))$
 and of the zeros of $f_k'(z)$ are disjoint. 
 \item $f_0\circ\ldots\circ f_{k-1}$ is injective on the set of critical values of $f_k$.
\end{enumerate}
\end{comment}
We prove that under the above hypothesis, the decomposition of $f$ as $f=f_0\circ\cdots\circ f_d$
is unique. 
The  hypothesis of non-merging of critical values allows us to decompose the monodromy group of $f_0\circ\ldots\circ f_k$
in a semidirect product of the monodromy group of $f_0\circ\ldots\circ f_{k-1}$
and the monodromy group of $f_k$. Under these hypotheses, we solve recursively the  tangential center problem on zero-cyles.
Using new methods, we show that in addition to the composition mechanisms, two new mechanisms leading to tangential centers exist. They both appear for balanced cycles on the level of the basis of recursion and in the recursion step.

One mechanism is related to the primality of  monomial or Chebyshev polynomial factors  with the characteristic polynomial $P_C(z)=\sum_{i=1}^m n_i z^{i-1}$ associated to the cycle $C(t)=\sum_{i=1}^m n_i z_i(t)$.  It is described by (1) of Theorem  \ref{theo:inductionsptep}  (basis) and Theorem \ref{theo:main2} (recursive step).
The other mechanism is related to the 
vanishing of the Newton-Girard component on projected cycles
in the case of $2$-transitive factors. It is described by (2) of Theorem  \ref{theo:inductionsptep} (basis)  and Theorem \ref{theo:main} (recursive step).

The motivation of the tangential center problem for the Abel equation
\begin{equation}\label{eq:abel}
x'=p(t)x^2+\lambda q(t) x^3, 
\end{equation}
proposed by Briskin, Françoise and Yomdin in a series of papers~\cite{BFY99}-\cite{BFY00-2},
was to obtain general mechanisms for the existence of centers. For $p,q$ 
polynomials, the tangential center problem has been totally solved
by Pakovich and Muzychuk~\cite{PM} proving that there is a tangential center
if and only if $\int_0^t q(s)\,ds$ can be written as a sum of polynomials having
a common factor with $\int_0^tp(s)\,ds$. Moreover, Briskin, Roytvarf and Yomdin~\cite{BRY} 
have proved that composition (just one summand) generates all centers of \eqref{eq:abel}
at infinity except for a finite number of exceptional cases. Nevertheless, there are centers and
tangential centers for which composition is not the generating
mechanism, like \eqref{eq:abel} when $p,q$ are not polynomials (see \cite{ABC,CGM}) or, in a more general
context,  (hyperelliptic) planar systems (see Example 9.2 of \cite{ABM}).
We hope the mechanims obtained here
will shed some light on these problems.

\section{Main results}\label{sec:main}

Let $f\in\C[z]$ be a polynomial of degree $m$.
Points $z \in \C$ such that $f'(z)=0$ are critical points of $f$. 
Corresponding values $t=f(z)$ are critical values and non-critical values are called regular values. 
Let $\Sigma$ denote the set of critical values of $f$. 
Then $f: f^{-1} (\C\setminus \Sigma)\to \C\setminus \Sigma$ is a fibration with zero-dimensional fiber. 
For any regular value of $t$ the fiber $f^{-1}(t)$ consists of $m$ distinct points. 
These points can be continuously transported along any path in $\C\setminus\Sigma$. 
Replacing in the fibration $f: f^{-1}(\C\setminus \Sigma)\to \C\setminus \Sigma$ the fibers $f^{-1}(t)$ 
by their 0-th homology groups $H_0(f^{-1}(t))$ or their reduced 0-th homology 
groups $\tilde H_0(f^{-1}(t))$, one obtains the homology or reduced homology fibration.

Let $t_0$ be a regular value of $f$ and $\pi_1 (\C\setminus\Sigma,t_0)$ the first homotopy 
group of the base with base point $t_0$. Then the continuous transport gives a morphism 
$\pi_1 (\C\setminus\Sigma,t_0)\to Aut(H_0(f^{-1}(t_0))$ from the first homotopy group of the 
base to the group of automorphisms of the homology fiber. Its image is called the monodromy 
group $G_f$ of $f$. This group is isomorphic to the Galois group of $f(z)-t$ seen as a polynomial
 over $\C(t)$ (see \cite[Th. 3.3]{CM}, for example).  Any section $C$ of the homology fibration is 
called a zero-chain. A section of the reduced homology fibration is called a zero-cycle.

Choosing an order $\{z_1(t),\ldots, z_m(t)\}$ among the $m$-preimages of $t$ by $f$, 
a zero-chain of $f$ is represented by 
\begin{equation}\label{C}
C(t) = \sum_{i=1}^m n_i z_i(t), \quad n_i \in \Z
\end{equation} 
and zero-cycles moreover satisfy
\begin{equation}\label{cycle}
\sum_{i=1}^m n_i =0.
\end{equation}
Note that $z_i(t)$, as well as $C(t)$ are multivalued functions on $\C\setminus\Sigma$. 
We also consider chains with coefficients in $\C$ when necessary. Then we will specify the coefficient ring. 

Let $C$ be a chain of $f$ and  $g$ a polynomial. The  Abelian integral of $g$ along a zero-cycle $C$ is defined by
\[
\int_{C(t)}g:=\sum_{i=1}^m{n_i}g(z_i(t)).
\]

This paper is dedicated to the solution of Problem \ref{tang} above.

In \cite{ABM} we introduced the notions of balanced and unbalanced cycles. We proved that if the cycle is unbalanced, then the tangential center problem is equivalent to solving some induced tangential center problems with $f$ replaced by some composition factors of $f$, hence polynomials of smaller complexity and  smaller degree. Let us precise the notions. In order to be able to perform the induction we need to extend the notions from cycles to chains. 
Let $\Gamma_m(f)\subset G_f$ denote the conjugacy class of $\tau_\infty$, where $\tau_\infty\in G_f$ 
is associated to a path winding once counter-clockwise around  infinity. 
That is, $\Gamma_m(f)$ is the set of all $\sigma \circ \tau_\infty \circ \sigma^{-1}$, 
for any $\sigma\in G_f$. We label the roots so that $\tau_{\infty}$, which is a permutation 
cycle of order $m$, shifts the indices of the roots by one, i.e., $\tau_{\infty}=(1,2,...,m)$.

\begin{defi}\label{defi:balanced}
We say that a chain $C(t)$ of $f$ is \emph{balanced} if
\[
\sum_{i=1}^m n_{p_i}\epsilon_m^i=0,\quad \text{for every }\tau=(p_1,p_2,\ldots,p_m)\in \Gamma_m(f),\quad 
\]
where $\epsilon_m$ is any primitive $m$-th root of unity. If $C(t)$ is not balanced, we say that 
$C(t)$ is \emph{unbalanced}.
 \end{defi}

The notion of balanced chain is independent on the way how permutation cycles are written. 
Let $\tau \in \Gamma_m(f)$ be of the form $\tau = \sigma \circ \tau_{\infty} \circ \sigma^{-1}$, $\sigma\in G_f$. Put $(p_1,p_2, \ldots, p_m)= (\sigma(1), \sigma(2), \ldots, \sigma(m))$ and 
\[
(n_{p_1}, n_{p_2}, \ldots, n_{p_m}) = (n_{\sigma(1)}, n_{\sigma(2)},\ldots, n_{\sigma(m)}) = \sigma(n_1, n_2, \ldots, n_m).
\]
Then, we can rephrase the definition of a chain $C(t)$ being \emph{balanced} as
\[
\sum_{i=1}^m n_{\sigma(i)}\epsilon_m^i=0,\quad \text{for every } \sigma \in G_f.
\]

\begin{defi}
We call the polynomial
\begin{equation}\label{char}
P_C(z)=\sum_{n=1}^m n_j z^{j-1}
\end{equation}
 \emph{characteristic polynomial} of the chain $C(t)$.
 \end{defi}

Note that if $C(t)$ is balanced, then $P_C(\epsilon_m)=0$. The converse does not hold in general.
The fact that the characteristic polynomial has the root $\epsilon_m$ 
depends on the election of the cycle of infinity, so 
indeed we should define the characteristic ideal as in \cite{ABM}, but in the special cases
we use the characteristic polynomial, $C(t)$ is balanced if and only if $P_C(\epsilon_m)=0$.

Assume that  $f=\tilde f\circ h$, for  $\tilde f,h\in \mathbb{C}[z]$, $\deg(h)=d$. 
Consider the imprimitivity system $\mathcal B=\{B_k\}_{k=1,\ldots,m/d}$, 
where $B_k=\{i \in \{1,2,\ldots,m\} |h(z_i(t))=h(z_k(t))\}$, associated to $h$.
(The general definition of imprimitivity system is recalled below in this section.)

\begin{defi}
The cycle ${h}({C}(t))$ of $\tilde{f}$ called the \emph{projection} of $C(t)$ by $h$ is defined by
\begin{equation}\label{def:cicloproyectado}
{h}(C(t)) = \sum_{{h}(z_i(t)) } \left(\sum_{{h}(z_j)={h}(z_i)} n_j\right){h}(z_{i}(t)) = \sum_{k=1}^{m/d} \left( \sum_{i\in B_k} n_i \right) w_k(t).
\end{equation}
Here $w_1(t),\ldots w_{m/d}(t)$ are all the different roots ${h}(z_i(t))$ of $\tilde f(z) = t$.
\end{defi}
With the  above notation, the main result of \cite{ABM}, Theorem~2.2 (ii), can be stated as follows.

\begin{theo}[\cite{ABM}]\label{theo:abm}
Let $f\in\mathbb{C}[z]$, and let $C(t)$ be an unbalanced cycle. Then, \[
\int_{C(t)}g\equiv 0, \quad {\text for } \, g\in\mathbb{C}[z]
\]
if and only if there exist $f_i,g_i,h_i\in \mathbb{C}[z]$, $1\leq i\leq d$ such that $\deg(h_i)>1$, $f=f_i\circ h_i$, $1\leq i\leq d$, $g=\sum_{i=1}^d g_i\circ h_i$ and $\int_{h_i(C(t))} g_i\equiv 0$. 
\end{theo}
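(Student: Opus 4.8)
\smallskip
\noindent\emph{Proof strategy.} The ``if'' direction is the only routine part: it rests on the \emph{projection formula} $\int_{C(t)}(\tilde g\circ h)=\int_{h(C(t))}\tilde g$, valid for any factorisation $f=\tilde f\circ h$ and any $\tilde g\in\C[z]$. This follows at once from \eqref{def:cicloproyectado} by grouping the roots $z_j(t)$ of $f(z)=t$ according to the value $h(z_j(t))$: the left-hand side becomes $\sum_k\bigl(\sum_{j\in B_k}n_j\bigr)\tilde g(w_k(t))$, which is the right-hand side. So if $g=\sum_{i=1}^d g_i\circ h_i$ with $\int_{h_i(C(t))} g_i\equiv 0$, then $\int_{C(t)}g=\sum_i\int_{h_i(C(t))} g_i\equiv 0$ by linearity.

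For the converse, fix a regular value, identify the fibre with $\{1,\dots,m\}$ as in Section~\ref{sec:main} (so $G_f\le S_m$, $\tau_\infty=(1,\dots,m)$), and let $L$ be the splitting field of $f(z)-t$ over $\C(t)$. Attach to $g\in\C[z]$ its evaluation vector $E_g=(g(z_1(t)),\dots,g(z_m(t)))\in L^m$; the assignment $g\mapsto E_g$ is injective, since two polynomials agreeing on all branches differ by a multiple of $f(z)-t$ and hence coincide. As the identical vanishing of a function is monodromy invariant, $\int_{C(t)}g\equiv 0$ is equivalent to $\langle\mathbf m,E_g\rangle:=\sum_j m_j g(z_j(t))\equiv 0$ for every $\mathbf m$ in the $G_f$-submodule $V:=\C[G_f]\cdot\mathbf n$ of the reduced permutation module $W=\{\mathbf a\in\C^m:\sum a_i=0\}$. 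For a factorisation $f=\tilde f\circ h$ with $\deg h=d>1$, let $U_h\subseteq\C^m$ be the $(m/d)$-dimensional $G_f$-submodule of vectors constant along the fibres of $h$ and $W_h=U_h\cap W$; then $E_g\in U_h\otimes_\C L$ if and only if $g\in\C[h]$. Thus the ``only if'' direction is the implication: $E_g\perp V$ in $L^m$ $\Rightarrow$ $E_g\in\sum_h U_h\otimes_\C L$, together with a choice of the block-constant summands whose individual integrals along the projected cycles vanish.

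The heart of the matter is the structure of $V$. By the Pakovich--Muzychuk classification of irreducible $G_f$-invariant subspaces, every irreducible submodule of $W$ lies in some block submodule $U_h$, with the sole exception of finitely many ``cyclotomic'' summands $E_1,\dots,E_r$ (occurring precisely when $f$ has a monomial or Chebyshev composition factor), which are spanned by the $\epsilon_m$-eigenvectors $v_\tau$ of the permutations $\tau\in\Gamma_m(f)$. Semisimplicity and the non-degeneracy of the $G_f$-invariant form then give the orthogonal decomposition $W=\bigl(\sum_h W_h\bigr)\oplus\bigl(\bigoplus_s E_s\bigr)$ and, dually, $\bigl(\bigoplus_s E_s\bigr)^{\perp}=\sum_h U_h$. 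Here the hypothesis intervenes: $C(t)$ being \emph{unbalanced} means $\langle\mathbf n,v_{\tau_0}\rangle\ne 0$ for some $\tau_0\in\Gamma_m(f)$; since $\Gamma_m(f)$ is a single $G_f$-conjugacy class and $G_f$ permutes the lines $\C v_\tau$, for \emph{every} $\tau\in\Gamma_m(f)$ there is $\sigma\in G_f$ with $\langle\sigma\mathbf n,v_\tau\rangle\ne 0$; as distinct isotypic components are orthogonal and the form is non-degenerate on each exceptional summand, this forces $V\supseteq\bigoplus_s E_s$. Consequently $E_g\perp V$ gives $E_g\in\bigl(\bigoplus_s E_s\bigr)^{\perp}\otimes_\C L=\sum_h U_h\otimes_\C L$, and, extracting polynomial representatives of the finitely many block-constant components (possible because $\C[z]$ is free of rank $m$ over $\C[f]$, which also absorbs the relatively trivial part into the summand $h=f$), we obtain $g=\sum_i g_i\circ h_i$ with $f=f_i\circ h_i$, $\deg h_i>1$. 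Finally, $V$ splits as $\bigoplus_s E_s$ together with the submodules $V\cap W_{h_i}$, and $V\cap W_{h_i}$, read through the identification $U_{h_i}\cong\C^{m/\deg h_i}$ and the surjection $G_f\twoheadrightarrow G_{f_i}$, is exactly the $G_{f_i}$-module generated by the coefficient vector of $h_i(C(t))$; choosing the block-constant pieces of $E_g$ compatibly with this splitting turns the $U_{h_i}$-component of the relation $E_g\perp V$ into $\int_{h_i(C(t))} g_i\equiv 0$.

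The main obstacle is the third step: the Pakovich--Muzychuk description of the invariant subspaces of the permutation module and the faithful matching of the abstract module decomposition of $V$ with the imprimitivity combinatorics of $G_f$ — in particular, obtaining genuine polynomial (not merely $L$-valued) representatives of the components and arranging the decomposition $g=\sum_i g_i\circ h_i$ to be compatible with the splitting of $V$. This is precisely where ``unbalanced'' cannot be dropped: for a balanced cycle $V$ need not contain all of the $E_s$, and the surviving directions in $V^{\perp}$ are exactly the new, non-composition solutions analysed in the rest of the paper.
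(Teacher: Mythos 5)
The paper does not prove this statement at all—it is Theorem~2.2(ii) of \cite{ABM}, imported verbatim—so there is no internal proof to compare against; your strategy (translate $\int_{C(t)}g\equiv 0$ into orthogonality of the evaluation vector $E_g$ against the $G_f$-module $V$ generated by $\mathbf{n}$, invoke the Pakovich--Muzychuk classification of invariant subspaces as in Lemma~\ref{Pako}, and use unbalancedness to force the exceptional constituent into $V$ so that $V^\perp$ lies in the sum of the block-constant subspaces) is exactly the route of \cite{ABM}, as the introduction here indicates. Two points to tighten: first, the exceptional constituent $U_m=\bigcap_h V_h^\perp$ is \emph{always} present and always contains $w_1$ (for a primitive $2$-transitive $f$ it is the entire reduced permutation module), not only when $f$ has a monomial or Chebyshev factor—your parenthetical is false, though your argument never actually uses it, and there is exactly one such constituent rather than ``finitely many''; second, the passage from block-constant components of $E_g$ in $L^m$ to genuine polynomial summands $g_i\circ h_i$ is not a consequence of freeness of $\C[z]$ over $\C[f]$ but of the Galois--L\"uroth argument (equivariance of the projections, $u\in L^{G_{B_k}}=\C(h_i(z_k(t)))$, and integrality over $\C[t]$ to replace a rational $g_i$ by a polynomial one), the same device this paper uses in the proofs of Theorems~\ref{theo:main} and~\ref{theo:main2}.
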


\begin{rema}
If the cycle $C(t)$ is unbalanced, then the tangential center problem reduces to solving it for the projected cycles. If some of these projected cycles are unbalanced, Theorem~\ref{theo:abm} applies again. In particular, if all projected cycles (in every succesive step) are unbalanced, we call the initial cycle \emph{totally unbalanced} and the problem is completely solved by induction using Theorem~\ref{theo:abm}.
\end{rema}

In this paper we study the remaining case, when $C(t)$ is balanced or in the progress of projecting we arrive at some balanced cycle. It appears that the 
decompositions of the polynomial $f$ play a central role in the problem.

The action of the monodromy group $G_f$ of $f$ on the set of solutions $z_i(t)$ of the equation $f(z)=t$ is 
closely related to the decomposability of $f$.
When a group $G$ acts on a finite set $X$, it is said that the action is {\it imprimitive} if $X$ can be non-trivially decomposed into subsets 
of the same cardinality $B_i$ such that every $\sigma \in G$ sends each $B_i$ into $B_j$ for some $j$. Otherwise, the action is 
called {\it primitive}. The action of $G$ is said to be {\it $2$-transitive} if given any two pairs of elements of $X$, $(i,j)$ and $(k,l)$, 
there is an element $\sigma \in G$ such that $\sigma(i)=k$ and $\sigma(j)=l$. It is easy to prove that in these cases $X$ cannot be divided 
into disjoint subsets such that $G$ acts on them, which is one of the trivial cases of a primitive action. These disjoint sets $B_i$ are 
called \emph{blocks} and they form a partition of $X$ called an \emph{imprimitivity system}. This definition is consistent with the previous 
one (see e.g. \cite[Prop. 4.1]{ABM}).

Precisely, the action of $G_f$ on $\{z_i(t)\}$ is imprimitive if and only if $f$ is decomposable (see \cite[Prop. 3.6]{CM}). 
Burnside-Schur Theorem (see e.g. \cite[Th. 3.8]{CM}) classifies primitive polynomials (i.e., a polynomial that cannot be written as a composition of two polynomials of degree greater than one):

\medskip
\noindent {\bf Burnside-Schur Theorem} {\it Let $f$ be a primitive polynomial and $G_f$ its monodromy group. Then, one of the following holds:
\begin{enumerate}
\item The action of $G_f$ on the $z_i(t)$ is $2$-transitive. We call such a polynomial \emph{$2$-transitive}.
\item $f$ is (linearly) equivalent to a Chebyshev polynomial $T_p$ where $p$ is prime.
\item $f$ is (linearly) equivalent to $z^p$ where $p$ is prime.  
\end{enumerate}
}
\medskip

\begin{comment}
The decomposition of a polynomial in primitive factors is not unique. The lack of uniqueness is described  
by Ritt's theorem~\cite{R}. Indeed, using~\cite{PRitt} and \cite{MZ}, we show in Section~\ref{sec:decompositions} 
that a decomposition $f=f_1\circ f_2\circ\ldots\circ f_r$ is unique when each factor $f_i$ is, up to pre- and post- 
composition by linear functions, of one of the following families:
\begin{enumerate}
 \item $2$-transitive
 \item Monomial
 \item Chebyshev
 \item $z^{sm}R^m(z^m)$, for $s,m\in\mathbb{N}$ and $R\in\mathbb{C}[z]$
 \item Caso raro y largo
\end{enumerate}
\end{comment}

We give a solution of the tangential center problem by induction on the number of composition factors $f_i$ under some 
additional hypotheses. First we solve the problem for basic composition factors: 
$2$-transitive, monomial or Chebyshev of \emph{not necessarily prime degree} (basis of induction). Monomial and Chebyshev cases are similar. They are hence treated together. 

We introduce first some notations. Given a polynomial $f$ of degree $m$ and $z_i(t)$, $i=1,\ldots,m$ the roots of $f(z)=t$, let $s_k$, $k=0,\ldots,m-1$ denote the sums of $k$-th powers of roots of $f-t$:
\[
s_k=\sum_{i=1}^{m} z_i^k(t).
\]
Note that the Newton-Girard formulae express $s_0,\ldots,s_{m-1}$ in function of the coefficients of $f$. In particular they are independent of $t$. Let $s(f)=(s_0,\ldots,s_{m-1})\in\mathbb{C}^m$ denote the Newton-Girard vector of $f$. 

When convenient, using the division algorithm, we will write $g(z)$ as $g(z)=\sum_{k=0}^{m-1}g_k(f)z^k$. That is, we express $g(z)$ as 
a polynomial in $z$ of degree $\deg g<m=\deg f$, with coefficients in $\mathbb{C}[f]$.
Substituting $f=t$ for integration each coefficient $g_k$ becomes a polynomial $g_k(t)$ in $t$.

We denote $g(t)=(g_0(t),\ldots,g_{m-1}(t))$. Let $< - , - >$ denote the scalar product in 
$\C^m$: $<g(t),s(f)>=\sum_{i=0}^{m-1}g_i(t) s_i$.

\begin{bigtheo}\label{theo:inductionsptep}
Assume that $C(t)=\sum n_jz_j(t)$ is a balanced chain of a polynomial $f$. 
\begin{enumerate}
\item If the action of $G_f$ is $2$-transitive, 
then there exists $n$ such that $n_j=n$ for every $1\leq j\leq m$.
Moreover, for every 
$g(z)=\sum_{k=0}^{m-1} g_k(f) z^k$, $\int_{C(t)}g\equiv 0$ if and only if
$
g(t)\in s(f)^\perp.
$
\item  If $f(z)=z^m$ (resp. $f(z)=T_m(z)$), then in the solution $g=\sum_{j=0}^{m-1} 
g_j h_j(z)$, with $h_j(z)=z^j$, (resp. $h_j(z)=T_j(z)$) of the tangential center problem $\int_{C(t)}g\equiv 0$ the only terms $h_j$ present 
are for $j$ such that $k=g.c.d.(j,m)$ such that $P_C(\epsilon_m^k)=0$, where $\epsilon_m$ is a primitive $m$-th root of unity.
\end{enumerate}

\end{bigtheo}

Condition (2) can also be written as:
If $f(z)=z^m$ (resp. $f(z)=T_m(z)$), then $\int_{C(t)}g\equiv 0$ if and only if $g(z)=\sum_{j=0}^{m-1} g_j h_j(z)$, where 
$g_j \in \mathbb{C}[f]$, $h_j(z) = z^j$ (resp. $h_j(z)=T_j(z)$), for any $j$ is such that $\Phi_{m/k}(z) | P_C(z)$, where 
$k=g.c.d.(m,j)$,  $P_C(z)=\sum_{i=1}^m n_i z^{i-1}$ is the characteristic polynomial of the chain $C$ and $\Phi_n(z)$ is the 
$n$-th cyclotomic polynomial, that is,
\[
\Phi_n(z) =  \underset{\substack{\quad 1\leq j\leq n,\\gcd(j,n)=1}}{\prod} ( z -
e^{\frac{2 \pi i}{n}j} ).
\]

\begin{rema}\label{ideasA} 
(1) We explain here the geometric idea behind the proof of Theorem \ref{theo:inductionsptep}: Let $C(t)$ be a balanced chain.
In order to prove (1), one considers the case when the monodromy group $G_f$ of $f$ is $2$-transitive and one fixes any root $z_i(t)$ and 
considers its stabilizer $H_i$. The assumption that $G_f$ is $2$-transitive means that the stabilizer $H_i$ of the root acts transitively on 
the other roots. Averaging by the action of the stabilizer $H_i$, the condition that the chain $C(t)$ is balanced, gives a relation independent 
of $i$. From this relation it follows that in the chain $C(t)=\sum_{i}n_i z_i(t)$, all $n_i$ coincide. If the chain is a cycle, the cycle is 
trivial. 

Once we know that all $n_i$ in a chain coincide, the characterization of the vanishing of the integral 
$\int_{C(t)}g$ by the orthogonality of the vector $g(t)$ to the Newton-Girard vector $s(f)$ is just rewriting the vanishing of the integral. 

(2) If $f(z)=z^m$, then the roots $z_i$ are simply given by the roots of unity $\epsilon_m^i$. By explicit calculation $\int_{C(t)}g = 
\sum_{j=0}^{m-1}g_j s^jP_C(\epsilon_m)$ and the theorem follows. The orbit of $C(t)$ by monodromy is a sum of irreducible invariant spaces. 
In this case the irreducible invariant spaces are easily calculated and are related to the divisors of $m$. The result follows from explicit 
calculations. 

If $f(z)=T_m(z)$, then $\int_{C(t)}g=\alpha_j(t)P_C(\epsilon_m^j)+\overline{\alpha_j(t)}P_C(\overline{\epsilon_m^j})$, with $\alpha_j(t) = 
\frac12 e^{i\xi(t)\frac{j}{m}}$ and $\xi(t)=\arccos(t)$. This case is next treated similarly as the monomial case.  
\end{rema}

\medskip

Next we show how to reduce the tangential center problem on a cycle $C(t)$ of $f=\tilde f\circ h$, where $h$ is $2$-transitive or linearly 
equivalent to a monomial or a Chebyshev polynomial of not necessarily prime degree. 
In the step of induction, we reduce the original tangential center problem of a cycle of $f$ to the tangential center problem of the projected 
cycle $h(C(t))$, which is a cycle of the function $\tilde f$, simpler than the original function $f$. We do the induction step under the 
generic hypothesis that the critical values of $\tilde f$ and $h$ do not merge:

\begin{defi} \label{dontmerge} 
Let $\tilde f$ and $h$ be two nonlinear polynomials and let $f=\tilde f\circ h$. We say that 
the \emph{critical values of $\tilde f$ and $h$ do not merge} if
\begin{enumerate}
\item 
$\{f(z)\colon \tilde f'(h(z))=0\}\cap \{f(z)\colon h'(z)=0\}=\emptyset$,
\item
$\tilde f$ is injective on the set of critical values of $h$.
\end{enumerate}
\end{defi}

As we prove in Lemma~\ref{lema:semidirect}, condition (1) of Definition \ref{dontmerge} assures that the monodromy group $G_f$ is a semidirect 
product of the subgroups $N_h$ and $G_{\tilde f}$, $G_f=N_h\rtimes G_{\tilde f}$, where $N_h$ is the normal closure of the monodromy group 
$G_h\subset G_f$ with the natural injection (which in general is not a group morphism). Condition (2) of Definition \ref{dontmerge} assures 
that there exists a group morphisms $\phi:G_h\to G_f$.

We divide the induction step in two cases, according to the nature of $h$. We study first the case when  $h$ is $2$-transitive. 

\begin{bigtheo}\label{theo:main}
Let $f(z)$ be a polynomial such that $f=\tilde{f}\circ h$ with $\tilde f,h\in\mathbb{C}[z]$, $1<\deg(h)=d<m=\deg f$. Let $C(t)\in \tilde H_0(f^{-1}(t))$ be a balanced cycle of $f$.
 Assume that the critical values of $\tilde f$ and $h$ do not merge and that $h$ is $2$-transitive.

 Then $g(z)=\sum_{i=0}^{d-1} z^i g_i(h(z))$, verifies
\begin{equation}\label{g}
\int_{C(t)} g\equiv 0 \quad
\end{equation}
if and only if 
\begin{equation}\label{tildegg}
\int_{h(C(t))}\tilde g\equiv 0,
\end{equation}
and $g_i\in\mathbb{C}[w]$, $i=0,\ldots,d-1$, are solutions of the linear system 
\begin{equation}
\label{system}
<g,s>=\tilde g(w).
\end{equation}
\end{bigtheo}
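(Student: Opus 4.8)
The plan is to exploit the semidirect product structure $G_f = N_h \rtimes G_{\tilde f}$ guaranteed by Definition \ref{dontmerge}(1), together with the $2$-transitivity of $h$, to split the Abelian integral $\int_{C(t)} g$ into a "vertical" part (integration within the fibers of $h$) and a "horizontal" part (integration over the projected cycle). First I would fix a regular value $t$, group the $m$ roots $z_i(t)$ of $f(z)=t$ into the $d$-element blocks $B_k$ of the imprimitivity system of $h$, so that over each root $w_k(t)$ of $\tilde f(w)=t$ the preimages under $h$ are $\{\tilde z_1(w_k),\ldots,\tilde z_d(w_k)\}$. Writing $g(z)=\sum_{i=0}^{d-1} z^i g_i(h(z))$ via the division algorithm by $h$, the Abelian integral becomes
\begin{equation}\label{splitintegral}
\int_{C(t)} g = \sum_{k=1}^{m/d} \sum_{j\in B_k} n_j g(z_j(t)) = \sum_{k=1}^{m/d}\; \sum_{i=0}^{d-1} g_i(w_k(t)) \left(\sum_{j\in B_k} n_j\, z_j(t)^i\right).
\end{equation}
The inner sum $\sum_{j\in B_k} n_j z_j(t)^i$ is a power sum weighted by the cycle coefficients over a single fiber of $h$.

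The key step is to show that, because $C(t)$ is a balanced cycle of $f$ and $h$ is $2$-transitive, the restriction of the weight vector $(n_j)_{j\in B_k}$ to each block $B_k$ is, after the monodromy action that permutes roots within the fiber, effectively constant on that block; more precisely that the projection $N_h$ of the monodromy forces $\sum_{j\in B_k} n_j z_j(t)^i = c_k \cdot s_i$ for all $i=0,\ldots,d-1$, where $c_k = \sum_{j\in B_k} n_j$ is exactly the coefficient of $w_k(t)$ in the projected cycle $h(C(t))$ and $s_i = \sum_{\ell=1}^d \tilde z_\ell(w_k)^i$ is $t$-independent by Newton–Girard. This is where I expect the main obstacle: establishing that balancedness of $C(t)$ combined with $2$-transitivity of the action of $N_h$ on each block kills every "non-constant" component of $(n_j)_{j\in B_k}$ — this should be a consequence of part (1) of Theorem A (the Newton–Girard / $2$-transitive basis case) applied fiberwise, i.e. one views the restriction of $C$ to a block as a balanced chain for the $2$-transitive polynomial $h$, invokes Theorem A(1) to replace it (in its orbit) by the constant chain $n \cdot \sum_{j\in B_k} z_j$, and checks that balancedness of $C$ for $f$ descends to balancedness of these block-restrictions — this descent is precisely what the non-merging hypothesis and the semidirect decomposition are for.

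Granting that, \eqref{splitintegral} collapses to
\begin{equation}\label{collapsed}
\int_{C(t)} g = \sum_{k=1}^{m/d} c_k\left(\sum_{i=0}^{d-1} s_i\, g_i(w_k(t))\right) = \int_{h(C(t))} \tilde g,
\end{equation}
where $\tilde g(w) := \sum_{i=0}^{d-1} s_i\, g_i(w)$ is precisely the function appearing in the linear system \eqref{system}. Then $\int_{C(t)} g \equiv 0$ if and only if $\int_{h(C(t))} \tilde g \equiv 0$, which is \eqref{tildegg}. For the converse direction — recovering all solutions $g$ from a solution $\tilde g$ of the projected problem — I would argue that any $g$ can be put in the form $\sum_{i=0}^{d-1} z^i g_i(h(z))$, that $\int_{C(t)}g$ depends on the $g_i$ only through the single combination $\tilde g = \sum s_i g_i$, and hence $g$ solves the tangential center problem for $f$ exactly when the $g_i$ satisfy the (under-determined, for $d>1$) linear equation \eqref{system} with a right-hand side $\tilde g$ solving \eqref{tildegg}. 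The genuinely delicate point to nail down carefully is the fiberwise reduction in the previous paragraph: one must be sure that "balanced for $f$" really forces the block-restricted chains to be balanced for $h$, using that $N_h$ acts as the full monodromy of $h$ on each block and that Condition (1) of Definition \ref{dontmerge} makes the cycle-at-infinity of $f$ decompose compatibly with the blocks.
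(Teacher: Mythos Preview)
Your proposal is correct and follows essentially the same route as the paper: the paper proves (Proposition~\ref{prop:balanced}(1)) that under the non-merging hypothesis and $2$-transitivity of $h$ the coefficients $n_j$ are literally constant on each block $B_k$, and then the collapse \eqref{collapsed} follows exactly as you describe. The only organizational difference is that the paper establishes ``$n_j$ constant on blocks'' by a direct averaging-over-stabilizer argument in the composite setting (parallel to, rather than an application of, Theorem~\ref{theo:inductionsptep}(1)), instead of first proving the block-restrictions are balanced chains of $h$ and then invoking the $2$-transitive basis case; but the underlying computation is the same.
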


\begin{rema}\label{NG} Thus, the solution $g$ and the inductive solution $\tilde g(w)$ are related by the linear system of equations $\tilde g(w)=\sum_{i=0}^{d-1} 
s_i g_i(w)$. Here $s_i$, $i=0,\ldots,d-1$ are expressed through the coefficients of $h$ by the Newton-Girard formulae for $h(z)-w$ and they are independent of $w$. 
We call $s(h)=(s_0,\ldots,s_{d-1})\in\C^{d}$ the Newton-Girard vector of $h$. Let $\pi_h(g)=\frac{<g(w),s(h)>}{|s(h)|}$ be the component of 
the vector  $g(w)=(g_0(w),\ldots,g_{d-1}(w))\in\C[w]^{d}$ representing $g$ in the direction of the Newton-Girard vector $s(h)$. Theorem 
\ref{theo:main} can be resumed by saying that under above conditions, \eqref{g} is equivalent to the vanishing of $\int_{h(C(t))}\pi_h(g)$,
where $\pi_h(g)$ is the component of the vector representing $g$ in the direction of the Newton-Girard vector of $h$. See Example \ref{examB}. 
\end{rema}

\begin{rema}\label{remB} 
The geometric idea of the proof of Theorem \ref{theo:main} is similar to the one of Theorem \ref{theo:inductionsptep} in the case of $h$ $2$-transitive. One 
applies averaging by the stabilizer $H_{i_0,j_0}$ of a root $z_{i_0,j_0}$ to the identity satisfied by balanced cycles. The roots 
are regrouped in groups defined by the imprimitivity system of $h$. One does not prove now that all coefficients $n_{ij}$ of the cycle are the 
same, but that all coefficients $n_{i_0,j}$, corresponding to roots in the same block $B_{i_0}$, are the same 
(Proposition~\ref{prop:balanced}). Next, given $g(z) = \sum_{i=0}^{d-1}z^ig_i(h(z))$, in the integral $\int_{C(t)}g$ one regroups 
all terms according to the block $B_{i_0}$ to which they belong. This allows to express $\int_{C(t)}g$ through an integral on the projected 
cycle $h(C(t))$ and the condition $\int_{C(t)}g=0$ becomes equivalent to the vanishing of $\int_{h(C(t))}\pi_h(g)$.
\end{rema}

Assume that  $f=\tilde f\circ h$, for  $\tilde f,h \in \mathbb{C}[z]$, $\deg(h)=d$. In order to formulate the induction step in the case of monomial or Chebyshev $h$ (Theorem \ref{theo:main2}), in addition to the projection of a cycle 
$C(t)$ by $h$ we need the notion of $h$-invariant parts of $C(t)$.

\begin{defi}\label{part}
For each cycle $C(t)$ of $f$ and the decomposition $f=\tilde f\circ h$ of $f$, consider the imprimitivity system  
$\mathcal{B} =\{B_1,\ldots,B_{m/d}\}$ associated to $h$.  We define the \emph {$h$-invariant parts of $C(t)$} as the 
chains $C_k(t) = \sum_{i\in B_k} n_i z_i(t)$ of $f$, or as chains  $\tilde C_k(w)=\sum_{i\in B_k}n_i z_i(w)$, $h(z_i)=w$, 
of $h$.
\end{defi}

The $h$-invariant part $C_k(t)$ corresponds to the part of the cycle presented by the roots in the $k$-th line in Figure~\ref{mono}.
Relations $C(t)=\sum_{k=1}^{m/d}C_k(t)$ and $\tilde C_k(w)=C_k(\tilde f(w))$ hold. Note that even if $C(t)$ is a cycle, 
the $h$-invariant parts $C_k(t)$ are only chains in general.

\begin{bigtheo}\label{theo:main2}Let $f(z)$ be a polynomial such that $f=\tilde{f}\circ h$ with $\tilde f,h\in\mathbb{C}[z]$, $1<\deg(h)=d<m=\deg f$. Let $C(t)\in \tilde H_0(f^{-1}(t))$ be a balanced cycle of $f$.
Assume that the critical values of $\tilde f$ and $h$ do not merge and  let $h$ be a monomial or a Chebyshev polynomial. 
Let $\tilde z_k(w)$, $k=1,\ldots,d$, denote the zeros of $h(z)-w$. 
Then the $h$-invariant parts $\tilde C_{k}(w)$, $k=1,\ldots,m/d$, of $C(t)$ are balanced.  

Moreover,
\[
\int_{C(t)} g \equiv 0
\]
if and only if $g$ is of  the form 
\[
g(z)=\tilde g(h(z))/(d-1)+u(z),
\]
where $\tilde g(w)$ is a polynomial such that
\begin{equation}\label{tildeg}
\int_{h(C(t))}\tilde g\equiv 0,
\end{equation}
and
\begin{equation}\label{gg}
\int_{\tilde C_{k}(w)} u\equiv 0,\quad 1\leq k\leq m/d.
\end{equation}
\end{bigtheo}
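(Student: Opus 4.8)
The plan is to exploit the semidirect product structure $G_f = N_h \rtimes G_{\tilde f}$ granted by the non-merging hypothesis (Definition \ref{dontmerge}), together with the explicit description of the monodromy of a monomial or Chebyshev $h$: the group $G_h$ is cyclic (resp. dihedral), which forces the imprimitivity blocks $B_k$ and the branches $\tilde z_k(w)$ to be permuted in a very rigid way. First I would show that each $h$-invariant part $\tilde C_k(w)$ is balanced. Because $h$ is $2$-transitive-free and primitive-of-monomial/Chebyshev-type, the action of $G_f$ on the blocks factors through $G_{\tilde f}$, and the non-merging condition lets one lift any $\sigma\in G_{\tilde f}$ to an element of $G_f$ acting on $C(t)$. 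Applying the balancedness of $C(t)$ along the cycle of infinity $\tau_\infty$ of $f$, and decomposing $\tau_\infty$ according to the semidirect product (its projection to $G_{\tilde f}$ is the cycle of infinity of $\tilde f$, and it restricts on each block to the cycle of infinity of $h$), one extracts from $\sum n_{\sigma(i)}\epsilon_m^i = 0$ the corresponding relations $\sum_{i\in B_k} n_{\sigma(i)}\epsilon_d^{i}=0$ for the parts. This is essentially a ``Fourier decomposition'' of the balancedness condition of $f$ over the two factors of the semidirect product.

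Next I would establish the characterization of solutions $g$. Write $g(z)=\sum_{i=0}^{d-1} z^i g_i(h(z))$ by the division algorithm applied to $h$. Then $\int_{C(t)} g = \sum_{k=1}^{m/d}\int_{C_k(t)} g$, and on each block $B_k$ the value $h(z_i(t))=w_k(t)$ is constant, so $\int_{C_k(t)} g = \sum_{i=0}^{d-1} g_i(w_k(t)) \big(\sum_{j\in B_k} n_j z_j(t)^i\big)$. The inner sums are (power sums of) the branches of $h(z)-w_k$, so this is an $h$-level computation. The key algebraic fact, which I would prove using Theorem A(2) applied to $h$ (monomial or Chebyshev) with the chain $\tilde C_k$, is that a function on the fibers of $h$ pairs to zero against every balanced $h$-chain iff it lies in a specific subspace; combined with the symmetry among the $\tilde C_k$ forced by $G_f$, one sees that the ``symmetric part'' $\tilde g(h(z))/(d-1)$ (the $\pi_h$-like projection onto the Newton-Girard direction) is exactly the component that sees the projected cycle $h(C(t))$, while the complementary part $u$ is precisely the part constrained only block-by-block via \eqref{gg}. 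The normalization $1/(d-1)$ comes from the trace of the averaging operator over the $d$ branches of $h$ (for monomial/Chebyshev, $s_1=0$, so the relevant constant is $s_0 - $ something $= d-1$ rather than $d$; this must be checked against the Newton-Girard vector of $h$).

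Then I would verify the two implications. For sufficiency: if $\tilde g$ satisfies \eqref{tildeg} and $u$ satisfies \eqref{gg}, then $\int_{C(t)} g = \frac{1}{d-1}\int_{C(t)} \tilde g(h(z)) + \int_{C(t)} u = \frac{1}{d-1}\int_{h(C(t))}\tilde g + \sum_k \int_{\tilde C_k(w)} u = 0$, using $\int_{C(t)}\tilde g(h(z)) = (d-1)\cdot$? — here again the constant must be reconciled; more precisely $\int_{C(t)}\tilde g(h(z))$ relates to $\int_{h(C(t))}\tilde g$ not by $(d-1)$ but directly, so the $1/(d-1)$ is absorbed into how $g$ is split, and I would be careful to state it as: $g - \tilde g(h(z))/(d-1)$ has vanishing ``projected component''. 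For necessity: given $\int_{C(t)}g\equiv 0$, set $\tilde g$ to be (a multiple of) the projection of $g$ onto functions of $h$, show $\tilde g$ satisfies \eqref{tildeg} by projecting the identity $\int_{C(t)}g=0$ onto the $G_{\tilde f}$-part, and then show the remainder $u = g - \tilde g(h)/(d-1)$ satisfies \eqref{gg} by projecting onto each block. The main obstacle I anticipate is precisely this clean separation: proving that the decomposition $g = \tilde g(h)/(d-1) + u$ is forced, i.e. that there is no ``mixed'' solution, which requires knowing that the only $G_f$-invariant relations among the power sums on blocks are the ones coming from $G_{\tilde f}$ on the projection and from $G_h$ on each block separately — this is where the $2$-transitivity-free, monomial/Chebyshev structure of $h$ (hence the abelian/dihedral, well-understood character theory of $G_h$) is essential, and where Theorem A(2) for $h$ does the heavy lifting.
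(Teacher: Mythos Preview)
Your framework is right (semidirect product, block decomposition, splitting $g$ via division by $h$), and your sufficiency argument is essentially correct. The genuine gap is in the necessity direction, which you yourself flag as ``the main obstacle'' without resolving.

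The paper does \emph{not} overcome this by invoking Theorem~\ref{theo:inductionsptep}(2) or by any character-theoretic analysis of $G_h$; in fact the reduction step uses nothing specific to the monomial/Chebyshev nature of $h$ beyond what is already in Lemma~\ref{lema:balanced}(3). The missing idea is an averaging trick. Fix a block $B_{i_0}$ and let $H_{i_0}=\bigcap_j H_{i_0,j}\subset N_h$ be the pointwise stabilizer of $B_{i_0}$ inside $N_h$. By Lemma~\ref{lema:balanced}(3), $H_{i_0}$ acts transitively on every other block $B_i$. Summing the identity $\sum_{i,j} n_{\tau(i,j)} g(z_{i,j}(t))=0$ over all $\tau\in H_{i_0}$ therefore replaces, for each $i\neq i_0$, the quantities $n_{\tau(i,j)}$ by their average $\tfrac{1}{d}\sum_k n_{i,k}$, while leaving the $B_{i_0}$-terms untouched. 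Introducing $\tilde g(w):=\sum_{h(z)=w} g(z)$ (a polynomial in $w$, by Galois/L\"uroth), this immediately yields an equation of the form
\[
\int_{\tilde C_{i_0}(w)} g \;=\; c\Big(\sum_{j} n_{i_0,j}\Big)\,\tilde g\big(w_{i_0}(t)\big),
\]
for each $i_0$, with a universal constant $c$. Summing over $i_0$ gives $\int_{h(C(t))}\tilde g\equiv 0$, and then Proposition~\ref{prop:cadena} applied to the non-cycle chain $\tilde C_{i_0}$ produces the explicit particular solution $c\,\tilde g(h(z))$, so that $u:=g-c\,\tilde g(h(z))$ satisfies $\int_{\tilde C_{i_0}}u\equiv 0$ for all $i_0$. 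This is exactly the forced decomposition you were unable to pin down.

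Two further remarks. First, your derivation of the constant is off: it does not come from any Newton--Girard direction of $h$ (that is the mechanism in Theorem~\ref{theo:main}, the $2$-transitive case), but purely from the size of the orbits in the averaging step; indeed nothing about $s_0,s_1,\ldots$ for $h$ enters the argument. Second, Theorem~\ref{theo:inductionsptep}(2) is used only \emph{after} the reduction, to solve the resulting problems $\int_{\tilde C_k} u\equiv 0$; it plays no role in establishing the reduction itself. The same averaging over $H_{i_0}$, applied to the balancedness identity $\sum n_{\sigma(i)}\epsilon_m^i=0$ instead of to $\int_{C(t)}g=0$, also gives the balancedness of each $\tilde C_k$ directly (this is Proposition~\ref{prop:balanced}(2)), more cleanly than your proposed ``Fourier decomposition of $\tau_\infty$''.
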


\begin{rema}\label{remaC}
The geometric idea of the Proof of Theorem \ref{theo:main2} is the following. One considers the imprimitivity decomposition $\{B_1,\ldots, B_m\}$ of the roots  associated to $h$. Fix $1\leq i_0\leq 
m$. Let $H_{i_0,j}$, $j=1,\ldots,d$, be the stabilizers of the root $z_{i_0,j}\in B_{i_0}$ and $H_{i_0}=\cap_{j=1,\ldots,d} 
H_{i_0,j}$ their intersection. Averaging with respect to $H_{i_0}$ the identity satisfied by balanced cycles, one shows first 
that the $h$-invariant parts are balanced. Moreover, averaging with respect to $H_{i_0}$ in the zero-dimensional abelian 
integral, one obtains that there exists a polynomial $p_{i_0}$ such that the vanishing of the abelian integral is equivalent to 
the integral on the $h$-invariant parts being equal to $p_{i_0}$. For these systems, we prove that the general solution 
is equal to the sum of a particular solution $\tilde g(h(z))/(d-1)$ of the non-homogeneous system with the general solution 
$u$ of the homogeneous system.
\end{rema}

Note that as $\tilde C_k(w)$ are balanced chains of $h$, 
\eqref{gg} is solved in Theorem~\ref{theo:inductionsptep} and the solution of \eqref{tildeg} is given by the induction hypothesis.

We prove Theorems~\ref{theo:main} and \ref{theo:main2} in Section \ref{section:demo}.

If $\tilde f$ and $h$ merge, then Theorems~\ref{theo:main} and \ref{theo:main2} also 
gives tangential centers. However, there can be other solutions not covered by them. Indeed,  
by perturbing $f = \tilde f \circ h$ we can assure that $\tilde f$ and $h$ do not merge and 
Theorems~\ref{theo:main} and \ref{theo:main2} gives solutions of the deformed system, which in the limit give solutions of the original system.

It follows from the Burnside-Schur Theorem that any polynomial $f$ can be decomposed as 
$f=f_0\circ\cdots\circ f_d$, where each factor $f_k$ is either $2$-transitive, linearly equivalent to
a monomial or linearly equivalent to a Chebyshev polynomial. Note that a composition of monomials is a monomial and
a composition of Chebyshev polynomials is a Chebyshev polynomial. 
We do not assume that the degree of these polynomials $f_k$ linearly equivalent to a monomial or to a Chebyshev polynomial is prime.  
Putting together Theorem~\ref{theo:abm} and Theorems~\ref{theo:inductionsptep}, \ref{theo:main} and \ref{theo:main2}, we solve inductively 
the tangential center problem under the following hypothesis on $f$:

\begin{hyp}\label{hyp}
Let $f\in\C[z]$ have a 
 decomposition $f=f_0\circ\cdots\circ f_d$, with $f_k$ $2$-transitive, linearly equivalent to a 
monomial or to a Chebyshev polynomial.  We assume that the critical values of 
$f_0\circ\cdots\circ f_{k-1}$ and $f_k$ do not merge
for any $1\leq k\leq d$.
\end{hyp}

We show that under Hypothesis \ref{hyp}, there exists a unique decomposition 
$f=f_0\circ\cdots\circ f_d$, with $f_k$ $2$-transitive, linearly equivalent to a monomial or to a Chebyshev polynomial, satisfying Hypothesis~\ref{hyp}. More precisely: 

\begin{prop}\label{unicity}

Assume that $f=f_0\circ\ldots\circ f_d$ for some polynomials $f_0,\ldots,f_d$ such that every $f_k$ is either $2$-transitive, linearly 
equivalent to a monomial or linearly equivalent to a Chebyshev polynomial, and that Hypothesis~\ref{hyp} holds.
If  $f=\tilde f_0\circ\ldots\circ \tilde f_{\tilde d}$ is another decomposition of $f$ for some polynomials $\tilde f_0, \ldots,
\tilde f_{\tilde d}$ such that every $\tilde f_k$ is either $2$-transitive, linearly equivalent to a monomial or linearly equivalent to a Chebyshev polynomial, 
then there exist $j_{k,1},\ldots,j_{k,d(k)}$, $j_{k,1}=j_{k-1,d(k-1)}+1$ ($k>0$), such that (up to linear transformations)
\[f_k = \tilde f_{k,1} \circ \ldots 
\circ \tilde f_{k,d(k)},\quad \text{for every } k=0, 1, \ldots, d.\] 

Moreover, if the critical values of $\tilde f_0\circ \ldots \circ \tilde f_{k-1}$ and $\tilde f_k$
do not merge for any $1\leq k\leq \tilde d$, then $d=\tilde d$, and (up to linear transformations)
\[ 
f_k=\tilde f_k,\quad \text{for every }k=0, 1, \ldots,d.
\]
\end{prop}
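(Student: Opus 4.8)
The plan is to prove the uniqueness statement by studying how the non-merging hypothesis controls the block structure of the monodromy group $G_f$, and then invoking Ritt-type results on decompositions of polynomials. First I would recall that a decomposition $f=g\circ h$ corresponds exactly to a block system for the action of $G_f$ on the $m$ roots $z_i(t)$, and that a chain $g_1\circ\cdots\circ g_r$ of decomposition factors corresponds to a chain of refining block systems $\mathcal B_0\supset\mathcal B_1\supset\cdots$; the factor $g_k$ is primitive precisely when the corresponding step of refinement admits no intermediate block system. The key structural input is the remark already recorded after Definition \ref{dontmerge}: under condition (1), $G_f=N_h\rtimes G_{\tilde f}$ where $N_h$ is the normal closure of $G_h$, and under condition (2) there is an honest injection $\phi\colon G_h\to G_f$. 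Iterating this along the decomposition $f=f_0\circ\cdots\circ f_d$ satisfying Hypothesis \ref{hyp} gives a preferred iterated semidirect product decomposition of $G_f$, in which the blocks of the system $\mathcal B_k$ attached to $f_0\circ\cdots\circ f_{k-1}$ are permuted by $G_f$ through the quotient $G_{f_0\circ\cdots\circ f_{k-1}}$, and within a block the induced action is (the permutation action of) the monodromy group of $f_k$.

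The first assertion — that any other decomposition into $2$-transitive/monomial/Chebyshev factors refines, up to linear equivalence, the given one — I would prove as follows. Given the second decomposition $f=\tilde f_0\circ\cdots\circ\tilde f_{\tilde d}$, its factors give another chain of block systems $\widetilde{\mathcal B}_0\supset\widetilde{\mathcal B}_1\supset\cdots$. One shows that each $\mathcal B_k$ from the first decomposition occurs among the $\widetilde{\mathcal B}_j$'s. The mechanism: because the critical values of $f_0\circ\cdots\circ f_{k-1}$ and $f_k$ do not merge, the block system $\mathcal B_k$ is \emph{canonical} — it is the unique coarsest block system on which $G_f$ acts with a $2$-transitive (or monomial/Chebyshev) quotient action of the prescribed degree $\deg(f_0\circ\cdots\circ f_{k-1})$ — and any block system of $G_f$, in particular every $\widetilde{\mathcal B}_j$, is forced to be comparable to it. Concretely, using the semidirect product structure $N_{f_k}\rtimes G_{f_0\circ\cdots\circ f_{k-1}}$ one checks that $N_{f_k}$ is a characteristic-type subgroup (it is the kernel of the action on $\mathcal B_k$) and that the non-merging of critical values prevents any "twisting" of blocks between consecutive levels; hence any block system lies between two consecutive $\mathcal B_k$'s. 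This forces $\widetilde f_{j_{k,1}}\circ\cdots\circ\widetilde f_{j_{k,d(k)}}$ to realize exactly the refinement from $\mathcal B_{k-1}$ to $\mathcal B_k$, so by the correspondence between block systems and decompositions (and the uniqueness of the "right component" of a decomposition up to post-composition by a linear map) we get $f_k=\widetilde f_{j_{k,1}}\circ\cdots\circ\widetilde f_{j_{k,d(k)}}$ up to linear transformations, with the indices $j_{k,\ell}$ matching up contiguously as stated.

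For the second assertion I would add the hypothesis that the second decomposition \emph{also} satisfies the non-merging condition at every level, and argue symmetrically: then the first decomposition likewise refines the second. Since each $f_k$ and each $\tilde f_k$ is primitive-type (i.e., cannot be split further without merging — a $2$-transitive polynomial is primitive; a monomial or Chebyshev of degree $p^a\cdots$, while possibly decomposable, has all its intermediate factors again monomials resp. Chebyshev, and the non-merging forces each glued block to stay monomial resp. Chebyshev), mutual refinement forces $d(k)=1$ for all $k$, hence $d=\tilde d$ and $f_k=\tilde f_k$ up to linear equivalence. Here I would use that the non-merging condition forbids a Ritt "second kind" exchange $z^a\circ z^b = z^b\circ z^a$ or $T_a\circ T_b=T_b\circ T_a$ from producing a genuinely different primitive-factor pattern, because such an exchange merges critical values (the critical value $0$ of $z^a$ and $z^b$, or $\pm1$ for Chebyshev, collide); this is exactly where Hypothesis \ref{hyp} is used in its full strength.

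The main obstacle I anticipate is the rigorous statement and proof that $\mathcal B_k$ is \emph{canonical}, i.e., that non-merging of critical values forces every block system of $G_f$ to be comparable with the $\mathcal B_k$'s rather than "diagonal" to them. This is the heart of the matter: one must rule out exotic block systems arising from the internal structure of the normal subgroups $N_{f_k}$ (for instance, when several $f_k$'s are monomials and $N_{f_k}$ is a large abelian group with many invariant subgroups). The argument should hinge on translating "no intermediate block system" and "no common critical value" into a statement that the relevant subgroups are self-normalizing / characteristic inside the iterated semidirect product, so that the only block systems are the "standard" ones $\mathcal B_0\supset\cdots\supset\mathcal B_d$; controlling this carefully when consecutive monomial/Chebyshev factors are present (where Ritt's second-kind relations live) is the delicate point, and is precisely what the no-merging hypothesis is designed to neutralize.
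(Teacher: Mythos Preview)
Your overall framework --- decompositions correspond to chains of block systems for $G_f$, and the task is to show comparability of these chains --- is exactly the paper's, and your intuition that the semidirect product structure (Lemma~\ref{lema:semidirect}) is what makes this work is correct. But you yourself flag the gap: the claim that every block system of $G_f$ is comparable to the chain $\mathcal B_0\supset\cdots\supset\mathcal B_d$ is never proved, only asserted with words like ``canonical'' and ``characteristic-type''. Those words are not arguments, and in fact the global statement you aim for (\emph{all} block systems lie in the given chain) is stronger than what is needed.

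The paper avoids this difficulty by peeling off one innermost factor at a time and inducting on $\tilde d$. The single step is Proposition~\ref{prop:uniqueness}: if $f=f_0\circ h_0$ satisfies the non-merging hypothesis and $f=f_1\circ h_1$ is any other decomposition, then either $h_0\mid h_1$ or $h_1\mid h_0$ (in the sense $h_i=w\circ h_j$). Its proof is short and uses exactly one concrete consequence of the semidirect product structure, namely Lemma~\ref{lema:balanced}(3): the subgroup of $N_h$ fixing an entire block $B_0$ acts \emph{transitively} on every other block. So if a block $B_1$ from the second decomposition contains an element outside $B_0$, the stabilizer of $1$ already sweeps out an entire $B_0$-block inside $B_1$, forcing $B_0$-blocks to refine $B_1$-blocks. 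This replaces your vague ``characteristic-type'' argument with a one-line transitivity fact.

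Once Proposition~\ref{prop:uniqueness} is in hand, the induction is routine: comparing $f_d$ with $\tilde f_{\tilde d}$, the case $\tilde f_{\tilde d}=w\circ f_d$ with $w$ nonlinear is ruled out because then $w$ and $f_d$ are both monomials (or both Chebyshev) and share the critical value $0$ (resp.\ $\pm1$), violating Hypothesis~\ref{hyp} at the level $f_0\circ\cdots\circ f_{d-1}=(\tilde f_0\circ\cdots\circ\tilde f_{\tilde d-1}\circ w)$ versus $f_d$. Hence $f_d=w\circ\tilde f_{\tilde d}$, one absorbs $w$ into the outer part, checks Hypothesis~\ref{hyp} persists, and continues. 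The second assertion follows by running the same argument with the roles of the two decompositions interchanged. This is precisely the Ritt-exchange obstruction you identified, but it enters only at this single inductive step, not through a global structure theorem on block systems.
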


A decomposition of $f$ in primitive polynomials is not unique in general. 
To get uniqueness, we have to regroup successive monomials or Chebyshev
factors (or factors linearly equivalent to one of them) that commute (see~\cite{PRitt}).
Ritt's Theorem states that if a polynomial $f$ admits two different decompositions, then
there exist monomials or Chebyshev factors (or a factor linearly equivalent 
to one of the previous types) in a decomposition of $f$ into primitive polynomials
that commute. Note for instance that $z^6=(z^2)^3=(z^3)^2$. Section~\ref{sec:mon},
where we prove Proposition~\ref{unicity}, deals with the converse problem, that is, given a 
decomposition of $f=\tilde f\circ h$, Hypothesis~\ref{hyp} ensures
that any other decomposition of $f$ is a further decomposition of $\tilde f$ and $h$,
or, in other words, there exists no commuting factors between $\tilde f$ and $h$.

To conclude this section, we give two examples of the application 
of Theorems~\ref{theo:inductionsptep}, \ref{theo:main} and \ref{theo:main2}.

\begin{exam}\label{examB}
First, let us consider a polynomial composition of two 2-transitive polynomials $f=\tilde f\circ h$,
with $\tilde f(z)=z^3 - z^2 + z$ and $h(z)= z^3 + 2 z^2 - 1$, and the 
cycle 
\[C(t)=z_1(t)-z_2(t)+z_4(t)-z_5(t)+z_7(t)-z_8(t),\] 
where we are 
assuming that $(1,2,3,4,5,6,7,8,9)$ is the permutation associated to
a loop around infinity. Then, the imprimitivity systems 
are the equivalence classes modulo divisors of $m$, and 
it can be easily checked that Hypothesis~\ref{hyp}
holds. Moreover, $C(t)$ is balanced. 
\begin{figure*}[ht]
\begin{center}
{\scalebox{.15}{\includegraphics{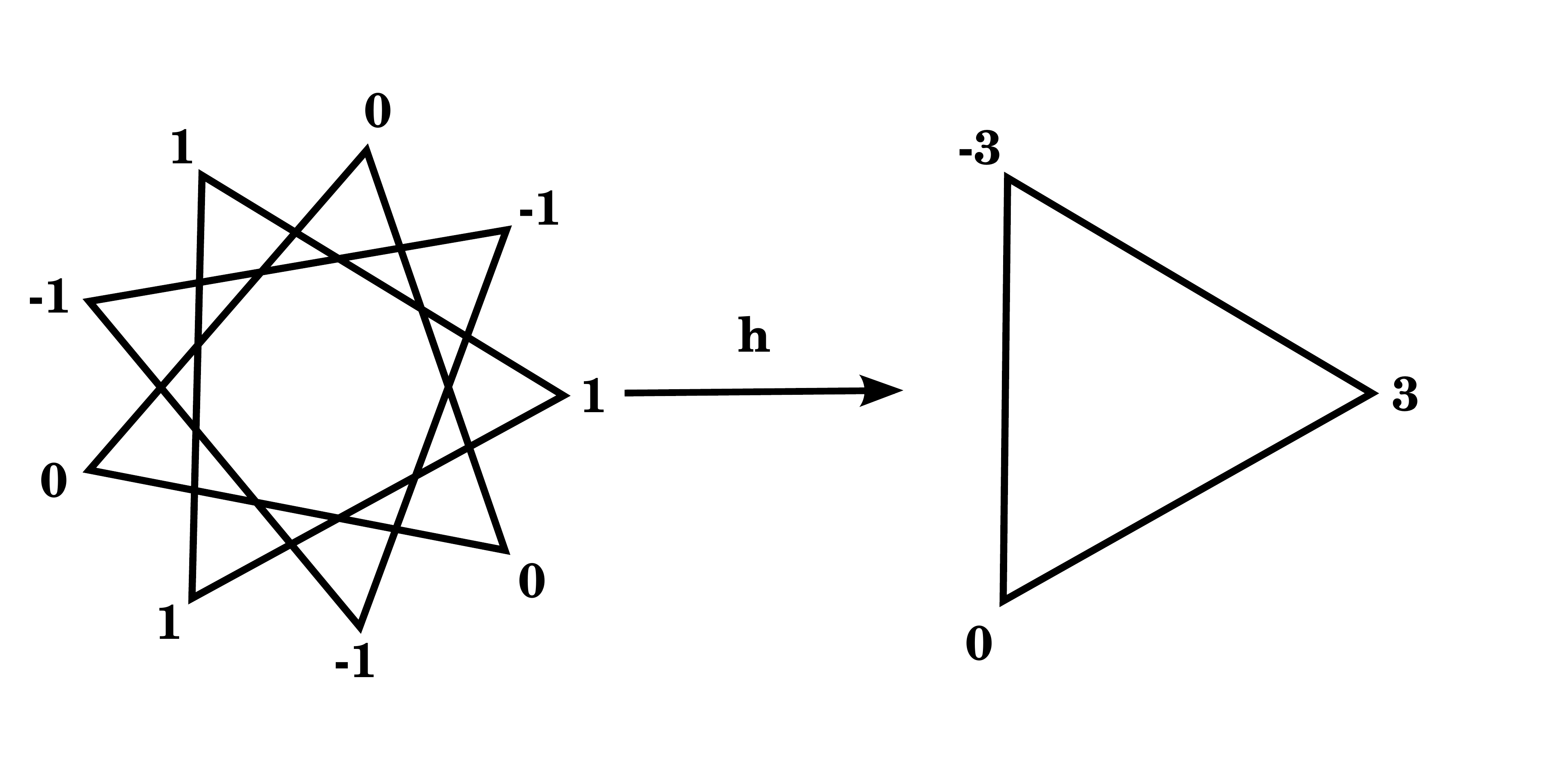}}}
\end{center}
\caption{Cycles $C(t)$ and $h(C(t))$ in Example~\ref{examB}}\label{cicle}
\end{figure*}

Write $g(z)=g_0(h(z))+zg_1(h(z))+z^2g_2(h(z))$, $g_i\in\mathbb{C}[w]$. By Theorem~\ref{theo:main},
$g$ satisfies $\int_{C(t)} g\equiv 0$ if and only if
\[
\int_{h(C(t))}\tilde g\equiv 0,
\]
where $h(C(t))=w_1(t)-w_2(t)$ is unbalanced, and $\tilde g(w)=3 g_0(w)-2g_1(w)+4g_2(w)$.

Now, the solutions of $\int_{h(C(t))}\tilde g\equiv 0$ are given by Theorem~\ref{theo:abm}.
As $\tilde f$ is primitive, the solutions are of the form
 $\tilde g(w)=k_0(\tilde f(w))$, 
for any polynomial $k_0\in\mathbb{C}[w]$.

Then $\int_{C(t)} g\equiv 0$ if and only if
$k_0(\tilde f(w))=3 g_0(w)-2g_1(w)+4g_2(w)$. For instance, if $k_0(w)=1$, and 
$g_0,g_1,g_2$ are quadratic polynomials: $g_i(z)=a_{i2}z^2+a_{i1}z+a_{i0}$,
then the space of solutions is $6$-dimensional given by
\[
\begin{split}
g(z) &=
\frac{1 + 2 a_{10} - 2 a_{11} + 2 a_{12} - 4 a_{20} + 4 a_{21} - 
   4 a_{22}}{3}+\left( a_{10} - a_{11} + a_{12}\right)z
\\&+\frac{1}{3} \left(4 a_{11} - 8 a_{12} + 3 a_{20} - 11 a_{21} + 
   19 a_{22}\right)z^2+\frac{4}{3} (2 a_{11} - 4 a_{12} - a_{21} + 2 a_{22})z^3
\\&+\left( a_{11} + 
 \frac{2}{3} (a_{12} + 3 a_{21} - 14 a_{22})\right)z^4+\frac{1}{3}\left(20 a_{12} + 3a_{21} - 
 22 a_{22}\right)z^5
\\&+\frac{2}{3} (7 a_{12} + 4 a_{22})z^6+\left( a_{12} + 4 a_{22}\right)z^7+ a_{22}z^8
\end{split}
\]
for any $a_{10},a_{11},a_{12},a_{20},a_{21},a_{22}\in\mathbb{C}$.
\end{exam}

\begin{exam}\label{examC}
Let us consider a polynomial composition of a 2-transitive polynomial and a monomial, $f=\tilde f\circ h$,
with $\tilde f(w)=w^3 - w^2 + w$ and $h(z)= z^6$, and the 
cycle 
\[C(t)=z_1(t)-z_2(t)+z_7(t)-z_{8}(t)+z_{13}(t)-z_{14}(t),\] 
where we are assuming that $(1,2,\ldots,17,18)$ is the permutation associated to
a loop around infinity. It can be checked that Hypothesis~\ref{hyp}
holds. Moreover, $C(t)$ is balanced. 

It is easy to see that $h(C(t))=3w_1(t)-3w_2(t)$ is an unbalanced cycle of $\tilde f$. Then, the 
solutions of 
\[
\int_{h(C(t))} \tilde g\equiv 0
\]
are given by Theorem~\ref{theo:abm}. More precisely, as $\tilde f$ is primitive, it follows that $\tilde g(w)$
is a function of $\tilde f$. Let $\tilde g(w)=g_0(\tilde f(w))$. On the other hand, 
the $h$-invariant parts of the cycle $C(t)$ are given by $\tilde C_1(t)=w_1(t)+w_3(t)+w_5(t)$, $\tilde C_2(t)=-w_1(t)-w_3(t)-w_5(t)$ and 
$\tilde C_3(t)=0$. Hence $P_{\tilde C_1}(w)=1+w^2+w^4$, $P_{\tilde C_2}=-P_{\tilde C_1}$ and $P_{\tilde C_3}=0$.

If $u=\sum_{i=1}^{5} u_j z^j$, where $u_j \in \mathbb{C}[z^6]$, by Theorem~\ref{theo:inductionsptep} (2), the 
solutions of
\[
\int_{\tilde C_k(t)} u\equiv 0\quad \text{for } k=0, 1, 2,
\]
are given by $u_0 = u_3 = 0$.

Finally, by Theorem~\ref{theo:main2}, the solutions of 
\[
\int_{C(t)}g\equiv 0
\]
are 
\[
g(z)=g_0(f(z))+u(z),
\]
for any polynomials $g_0$ and  $u=\sum_{i=1}^5 u_j z^j$ such that $u_0 = u_3 =0$.

\end{exam}

\section{Structure of the space of solutions}

In this section we give the general structure of the space of solutions. It will be used in the next section for solving the 
tangential center problem in two basic cases: the monomial and the Chebyshev case.

Choosing a basis $z_1(t),\ldots, z_m(t)$ in $H_0(f^{-1}(t))$, one can identify each chain $C(t)=\sum_{i=1}^m n_i z_i(t)$ with 
$n(C)=({n_1},\ldots, {n_m}) \in \mathbb{C}^m$. Similarly, to each vector $v=(v_1, \ldots,v_m) \in \C^m$ we associate the chain 
$C_v:= \sum_{i=1}^m v_i z_i(t)$. Then,
\[ 
\int_{C(t)}g=<(g(z_1(t)),\ldots,g(z_m(t))),\bar{n}(C(t))>,
\]
where $< - , - >$ is the usual scalar product in $\C^m$ and $\bar{n}$ denotes the complex conjugate of $n$. 

Given $f$ and $C$ we search for all $g$ such that $\int_{C(t)}g\equiv0$. Let $G_f$ be the monodromy group of $f$ and 
$V(t)\subset H_0(f^{-1}(t))$ the vector space generated by the orbit of a chain $C(t)$ of $f$ by $G_f$. By analytic continuation, the vanishing of $\int_{C(t)} 
g$ is equivalent to the vanishing of $\int_{\sigma(C(t))}g$, for any $\sigma(C(t))\in V(t)$. Let $H^0(f^{-1}(t))$ be the dual 
space to $H_0(f^{-1}(t))$. Hence, from abstract point of view, the tangential center problem is simply the problem of 
determining $(V(t)^\perp)^*$: the dual space to the orthogonal complement of $V$. The space $H^0(f^{-1}(t))$ is organized as 
an $m$-dimensional $\C[t]$-module, with multiplication defined by 
\[
P(t)g(z)=P(f(z))g(z).
\]

Let $V_r$ denote $r$-periodic (mod $m$) vectors in $\Q^m$ and let $D(f)$ be the set of positive divisors $d$ of $m=deg(f)$ 
such that there exists a decomposition $f= \tilde f \circ h$, where $deg(h)=d$. The structure of the $G_f$-invariant subspaces 
of $\Q^m$ is determined by Lemma 5.1 of \cite{PM}.

\begin{lema}[\cite{PM}]\label{Pako}
Any $G_f$-irreducible invariant subspace of $V(t)$ is of the form 

\[
U_r(t) = V_r(t) \cap ( V_{r_1}(t)^\perp\cap\cdots V_{r_l}(t)^\perp ),
\] 
where $r \in D(f)$ and $\{r_1, \ldots, r_l\}$ is a complete set of divisors of $m$ covered by $r$, that is, they are 
all the maximal divisors of $r$ in $D(f)$. The subspaces $U_r(t)$ are mutually orthogonal and any $G_f$-invariant subspace of $\Q^m$ 
is a direct sum of some $U_r(t)$ as above.
\end{lema}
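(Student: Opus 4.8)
The plan is to reduce the classification of $G_f$-invariant subspaces to the representation theory of the single element $\tau_\infty$, and then to read off the irreducible constituents from the lattice of decompositions of $f$. I work over $\Q^m$, as in the sentence preceding the statement (over $\C$ the atoms below generally split further, so irreducibility is the rational phenomenon). Since $\tau_\infty=(1,2,\ldots,m)\in G_f$ is an $m$-cycle, its minimal polynomial is $z^m-1=\prod_{d\mid m}\Phi_d(z)$, and $\Q^m$ splits as the $\langle\tau_\infty\rangle$-isotypic decomposition
\[
\Q^m=\bigoplus_{d\mid m} V_{\Phi_d},\qquad V_{\Phi_d}:=\ker\Phi_d(\tau_\infty),
\]
where each $V_{\Phi_d}\cong\Q(\zeta_d)$ ($\zeta_d$ a primitive $d$-th root of unity) is $\Q$-irreducible \emph{as a $\langle\tau_\infty\rangle$-module} and occurs with multiplicity one ($\dim V_{\Phi_d}=\phi(d)$). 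In these terms $V_r=\bigoplus_{d\mid r}V_{\Phi_d}$ is exactly the space of $r$-periodic vectors. The first observation I would record is that any $\langle\tau_\infty\rangle$-invariant subspace, hence any $G_f$-invariant subspace, is a direct sum of a subset of the $V_{\Phi_d}$, simply because each isotypic component already has multiplicity one.

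Next I would isolate the structural heart, that the permutation module is \emph{multiplicity-free}. Indeed, if $W,W'$ were isomorphic $G_f$-irreducible submodules, then restricting a $G_f$-isomorphism to $\langle\tau_\infty\rangle$ forces $W$ and $W'$ to have the same $\langle\tau_\infty\rangle$-isotypic content; but each is a direct sum of distinct $V_{\Phi_d}$, whence $W=W'$ as subspaces. Thus every $G_f$-invariant subspace is the direct sum of the (unique) irreducibles it contains, each irreducible being a sum of certain $V_{\Phi_d}$; and since $G_f$ acts by permutation matrices, i.e. by an orthogonal representation, distinct irreducibles are mutually orthogonal. This already delivers the orthogonality and the ``direct sum of some $U_r$'' assertions, once the $U_r$ are matched with the irreducibles.

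It then remains to determine how the $V_{\Phi_d}$ group into irreducibles. First I would identify the $G_f$-invariant \emph{periodic} subspaces: $V_r$ is $G_f$-invariant exactly when the partition of $\{z_i(t)\}$ into residue classes $\bmod\, r$ is a block system of $G_f$, which by the correspondence between block systems and composition factors (\cite[Prop. 3.6]{CM}) happens precisely when some composition factor of $f$ has fibres realizing this partition; these admissible values are the $r$ recorded by the decomposition data $D(f)$. The admissible $V_r$ form a lattice under intersection, and I would define the atoms exactly as stated,
\[
U_r=V_r\cap\bigl(V_{r_1}^\perp\cap\cdots\cap V_{r_l}^\perp\bigr),
\]
with $r_1,\ldots,r_l$ the maximal proper divisors of $r$ lying in $D(f)$. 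Each $U_r$ is $G_f$-invariant, being an intersection of $G_f$-invariant subspaces (here $V_{r_i}^\perp$ is invariant because the representation is orthogonal); the $U_r$ are pairwise orthogonal; and a short inclusion--exclusion count on the $V_{\Phi_d}$ shows they exhaust $\Q^m$, with $U_r$ collecting precisely the $V_{\Phi_d}$ such that $d\mid r$ but $d\nmid r_i$ for every $i$.

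The main obstacle is proving that each atom $U_r$ is \emph{genuinely irreducible}, equivalently that the lattice of $G_f$-invariant subspaces is generated by the block systems and contains no finer invariant subspace separating two $V_{\Phi_d}$ inside the same $U_r$. My plan under the hypotheses of the paper is to use the semidirect decomposition $G_f=N_h\rtimes G_{\tilde f}$ furnished by Condition (1) of Definition~\ref{dontmerge}, and to argue by induction on the number of composition factors: the $G_{\tilde f}$-action on the block-constant part reproduces, by induction, the atoms coming from $\tilde f$, while the normal factor $N_h$ supplies the extra twist symmetry that fuses the remaining $V_{\Phi_d}$ into a single irreducible (exactly as the hyperoctahedral signed-permutation action does when a monomial sits beneath a $2$-transitive factor). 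In the general case, with no non-merging assumption, this irreducibility is precisely the content of \cite[Lemma 5.1]{PM}, resting on the classification of Schur rings over the cyclic group $\Z/m$; I would either invoke that result directly or reconstruct it through the commutant algebra of $G_f$, whose structure is controlled by the block lattice and matches the atoms $U_r$.
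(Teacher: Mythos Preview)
The paper does not give its own proof of this lemma: it is simply quoted as Lemma~5.1 of \cite{PM}. So there is no proof in the paper to compare against; your sketch already goes further than the paper itself.

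That said, your sketch is accurate on the easy half and honest about the hard half. The decomposition $\Q^m=\bigoplus_{d\mid m}V_{\Phi_d}$ under $\langle\tau_\infty\rangle$, the multiplicity-one argument showing the $G_f$-module is multiplicity-free, the orthogonality of distinct irreducibles, and the identification of the $G_f$-invariant $V_r$ with block systems (hence with $D(f)$) are all correct and give the ``any invariant subspace is a direct sum of some $U_r$'' and ``mutually orthogonal'' parts of the statement.

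The genuine gap is exactly where you flag it: the \emph{irreducibility} of each $U_r$. Your two proposed routes do not close it. The inductive argument via the semidirect product $G_f=N_h\rtimes G_{\tilde f}$ requires the non-merging hypothesis, which the lemma does not assume; the lemma is stated for arbitrary $f$, and the paper uses it in that generality (e.g.\ in Lemma~\ref{general}). And ``invoking \cite[Lemma~5.1]{PM} directly'' is circular, since that \emph{is} the statement. The actual proof in \cite{PM} proceeds through the classification of Schur rings over the cyclic group $\Z/m\Z$ (Leung--Man), which is precisely the ``commutant algebra'' direction you mention in your last sentence; that classification is the substantive input, and your sketch stops short of supplying or reconstructing it. Without it, one cannot rule out a $G_f$-invariant proper subspace of some $U_r$ that separates two $V_{\Phi_d}$'s sitting inside it.
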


For any natural $k$, $w_k$ denotes the complex vector $(1,\epsilon_m^k, \epsilon_m^{2k}, \ldots, \epsilon_m^{(m-1)k}) \in 
\C^m$, where $\epsilon_m = \exp(2 \pi i/m)$ is a primitive $m$-th root of unity. For $k=1,\ldots,m$ these vectors are 
orthogonal and form a basis of $\C^m$.

A choice of the chain $C(t)$ determines the corresponding invariant space $V(t)$. Note, moreover, that if a chain $C(t)$ has 
real coefficients $n_j$, then for any $k$ the vectors $w_k$ and $w_{m-k}$ simultaneously belong to $V(t)^{\C}$ or 
$(V(t)^\perp)^{\C}$.

Let us recall that $(V_r)^{\C}$ is generated by the vectors $w_k$ such that $m/r$ divides $k$. From now on, when necessary we 
will assume that all $\Q$-vector spaces ($V(t)$, $V(t)^{\perp}$, $V_d$, $U_d$, etc) are complexified.

Let $C_k:=\bar{w}_k$ and let $\{C_k: k\in S\subset\{1,\ldots,m\}\}$ be a basis of the space $V(t)$. Then the solution of 
the tangential center problem is given by the space generated by the dual basis $C_{k}^*$, $k\in\{1,\ldots,m\} \setminus S$.

Let $P_C$ denote the characteristic polynomial $P_C(z)=\sum_{j=1}^m n_j z^{j-1}$ and let 
\[
\Phi_j (z) = \prod_{1\leq k<j,\; gcd(k,j)=1}(z-e^{\frac{2\pi i}{j}k})
\] 
be the $j$-th cyclotomic polynomial. Note that $P_C(\epsilon_m^k) = <C,\bar{w}_k> = <w_k, C>$.

\begin{lema}\label{general}
Let  $f(z)$ be a polynomial of degree $m$ and $C(t)$ a chain of $f$. Then:
\begin{enumerate}
\item The $\mathbb{C}[t]$-module of solutions of the tangential center problem is given by
\[
U^*=U_{d_1}^*\oplus\cdots\oplus U_{d_j}^*.
\]
where a basis of $U_1^*$ is given by $C_m^*$ and $U_m^*$ contains functions $C_k^*$, with $k$ relatively prime 
with $m$. For any $d_j$ divisor of $m$, $U_{d_j}^*$ contains functions of the form $C_k^*$, with $k$ a multiple 
of $m/d_j$, but not of any prime factors of $d_j$, i.e., $g.c.d.(m, k)=m/d_j$.

\item The subspace $U_1^*$ is a subspace of $U^*$ if and only if $C(t)$ is a cycle. The subspace $U_m^*$ is a subspace of $U^*$ if and only if $C(t)$ is balanced. 

\item The above spaces $U_j^*$ are mutually orthogonal. Their dimensions satisfy 
\[
dim_{\C} (U_1^*)=1, \quad dim_{\C} (U_m^*)\geq \phi(m), \quad dim_{\C} (U_{d_l}^*)\geq \phi(d_l).
\] 
\end{enumerate}
\end{lema}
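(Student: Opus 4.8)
The plan is to recast the problem about $g$ as linear algebra on the $G_f$-module $\C^m$ and feed it into Lemma~\ref{Pako}. Every element of $G_f$ acts on $\C^m$ by a coordinate permutation, hence unitarily, so $V(t)^\perp$ is $G_f$-invariant together with $V(t)$; by Lemma~\ref{Pako} both decompose, after complexification, as orthogonal direct sums of the irreducible pieces $U_d$, where $U_d$ is spanned by those $C_k=\bar w_k$ with $g.c.d.(m,k)=m/d$. Since the $w_k$, and hence the $C_k$, are mutually orthogonal, $\dim_\C U_d=\#\{k\le m:\ g.c.d.(m,k)=m/d\}=\phi(d)$, which gives the dimensions in (3), and mutual orthogonality of the $U_d^*$ in (3) is inherited from that of the $U_d$. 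By the discussion preceding the lemma, a basis of the solution space (the annihilator of $V(t)$) is $\{C_k^*:\ k\notin S\}$, where $V(t)=\operatorname{span}\{C_k:k\in S\}$; the $\C[t]$-action $P(t)g(z)=P(f(z))g(z)$ commutes with $G_f$ (since $f(z_i(t))\equiv t$ for all $i$), so each $U_d$ is a $\C[t]$-submodule and $U^*$ is the direct sum of the $U_d^*$ it contains. Everything is thus reduced to deciding which $U_d$ lie in $V(t)^\perp$.

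For that criterion I would fix $k$, set $d=m/g.c.d.(m,k)$ (so $\epsilon_m^k$ is a primitive $d$-th root of unity and $C_k\in U_d$), and use unitarity of $\sigma\in G_f$ together with $\langle C,\bar w_k\rangle=P_C(\epsilon_m^k)$ to get $\langle\sigma(C),\bar w_k\rangle=\langle C,\sigma^{-1}(\bar w_k)\rangle$ with $\sigma^{-1}(\bar w_k)\in U_d$. As $\sigma$ ranges over $G_f$ these vectors span $U_d$, because $U_d$ is $G_f$-irreducible; hence $C_k\perp V(t)$ iff $C\perp U_d$, i.e.\ iff $P_C(\epsilon_m^{k'})=\langle C,\bar w_{k'}\rangle=0$ for every $k'$ with $g.c.d.(m,k')=g.c.d.(m,k)$, i.e.\ iff $P_C$ vanishes at every primitive $d$-th root of unity. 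Since $\Phi_d(z)=\prod_\zeta(z-\zeta)$ over the primitive $d$-th roots $\zeta$, this is precisely $\Phi_{m/g.c.d.(m,k)}(z)\mid P_C(z)$ (and if $P_C\in\Q[z]$ it suffices that $P_C$ vanish at one such root). This is (4); combined with the first paragraph it identifies the summands $U_{d_1},\dots,U_{d_j}$ of $U^*$ and their bases, which is (1).

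For (2) I would apply this to the two extreme cyclotomic classes. With $k=m$, $U_1$ is the line through $C_m=(1,\dots,1)$, fixed by every coordinate permutation, so $\langle\sigma(C),C_m\rangle=\sum_i n_i$ for all $\sigma\in G_f$; hence $U_1\subseteq V(t)^\perp$ iff $\sum_i n_i=0$, i.e.\ iff $C(t)$ is a cycle, by~\eqref{cycle}. With $k$ coprime to $m$, $U_m$ is spanned by the corresponding $C_k=\bar w_k$, and $\langle\sigma(C),\bar w_k\rangle=P_{\sigma(C)}(\epsilon_m^k)$ vanishes for all $\sigma\in G_f$ exactly when $C(t)$ is balanced (the rephrased Definition~\ref{defi:balanced}), equivalently when $\Phi_m\mid P_C$; hence $U_m\subseteq V(t)^\perp$ iff $C(t)$ is balanced.

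The step I expect to be the real obstacle is the ``irreducibility upgrade'' in the second paragraph --- passing from the single scalar equation $P_C(\epsilon_m^k)=0$ to orthogonality of $C$ with the entire block $U_d$. It relies on the $U_d$ being irreducible \emph{over $\Q$} (equivalently, on the Galois action on $d$-th roots of unity moving $C_k$ transitively through its cyclotomic class), which is what Lemma~\ref{Pako} provides; this is completely transparent in the setting in which the lemma will be used, namely $f$ a monomial or a Chebyshev polynomial of degree $m$, where $D(f)$ is the full set of divisors of $m$ and each $U_d$ is exactly one cyclotomic class. One must also keep an eye on the rationality of $P_C$ (cf.\ the remark after Definition~\ref{defi:balanced}) whenever passing between ``$P_C(\epsilon_m)=0$'' and ``$\Phi_m\mid P_C$''.
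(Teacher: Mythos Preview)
Your proposal is correct and follows essentially the same route as the paper: both arguments feed $V(t)^\perp$ into Lemma~\ref{Pako} to obtain the orthogonal decomposition into the blocks $U_d$ (complexified), identify the basis $\{C_k:\ g.c.d.(m,k)=m/d\}$ of each block, and then read off which blocks lie in $V(t)^\perp$ via the vanishing conditions $P_C(\epsilon_m^k)=0$, specializing to $d=1$ and $d=m$ for part~(2). Your presentation is somewhat more systematic (you package the ``irreducibility upgrade'' once and reuse it, whereas the paper repeats the argument for $U_1$, $U_m$, and general $U_d$ separately), and you are right to flag that this upgrade hinges on irreducibility of $U_d$ over~$\Q$ rather than~$\C$---the paper relies on the same fact, via the $\Q$-structure of $V(t)$, but leaves it implicit.
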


\begin{proof}
Assume first that $C(t)$ is a cycle. By definition of a cycle, this means that the chain $C_{m}$ associated to 
$w_m=(1,\ldots,1)$ belongs to $V(t)^\perp$. That is, $P_C(\epsilon_m^m) = P_C(1) = 0$. This is equivalent to $\Phi_1(z)$ 
being a factor of $P_C(z)$ and $C_m^*$ belonging to the space of solutions.

A chain $C(t)$ is unbalanced if for any numeration of the roots corresponding to a permutation cycle at infinity one has 
$\sum_j n_j \epsilon^j \ne 0$, i.e., the chain ${C}_{1}$ does not belong to $V(t)^\perp$.
Assume now that $C(t)$ is balanced. This means that the chain ${C}_{1}$ also belongs to $V(t)^\perp$, that is, 
$P_C(\epsilon_m^1)=0$. By Lemma \ref{Pako}, $U_m= V_{r_1}^\perp\cap\cdots\cap V_{r_l}^\perp$, where $r_i$ are all maximal 
divisors of $m$ in $D(f)$, but strictly smaller than $m$. This means that $m/r_i$ are prime factors of $m$ in $D(f)$. Then $U_m$ contains $C_{k}$, where $k$ is not a multiple of any prime factor of $m$ in 
$D(f)$. In particular, $U_m$ contains $C_k$ when $k$ is coprime with $m$. Note then that the vector $w_1$ belongs to $U_m$, and 
therefore $U_m \subset V(t)^\perp$. That gives that the functions  $C_k^*$ for $k$ coprime with $m$ are in the space of 
solutions of the center problem and $\Phi_m(z)$ divides $P_C(z)$.

Finally, we calculate the remaining elements of the basis of the solution vector space. They correspond to some irreducible 
components of $(V^{\perp})^*$. By Lemma \ref{Pako} each one of them is of the form $U_d = V_d \cap (V_{r_1}^\perp\cap\cdots 
\cap V_{r_l}^\perp)$, for some divisor $d$ of $m$ different from $1$ and $m$ and a complete set $\{r_1,\ldots,r_l\}$ of 
elements of $D(f)$ covered by $d$. Similarly as for $U_m$, now $U_d$ contains ${C}_{k}$, where $k$ is a multiple of 
$m/d$, but not of any prime factors of $d$. By duality, for the same $k$ the dual functions $C_k^*$ belong to a basis of solutions of $U_d^*$. Besides, that also means 
that $\Phi_d(z)$ divides $P_C(z)$.
\end{proof}

For some intermediate steps in the solution of the Tangential center problem we shall 
need to solve not the Tangential center problem but the more general equation
$$
\int_{C(t)}g=p(t),$$
where the righthand side is a polynomial $p(t)\in\C[t]$.

\begin{prop}\label{prop:cadena}
Let $p(t)\in\mathbb{C}[t]$, $p(t)\neq 0$. There exists a solution of $\int_{C(t)}g\equiv p(t)$ if and only if $C(t)$ is not a 
cycle.

Moreover, if $C(t)$ is not a cycle, then $\int_{C(t)}g\equiv p(t)$ if and only if 
\[
g=\dfrac{p\circ f}{\sum n_i}+u,
\]
where $u$ is a solution of
$\int_{C(t)}u\equiv 0$.
\end{prop}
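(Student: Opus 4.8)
The plan is to reduce the inhomogeneous equation $\int_{C(t)}g\equiv p(t)$ to the homogeneous one by constructing an explicit particular solution, which exists precisely when $\sum n_i\neq 0$, i.e.\ when $C(t)$ is not a cycle.

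First I would treat the ``only if'' direction. Suppose $C(t)$ is a cycle, so $\sum_{i=1}^m n_i=0$. For any $g\in\C[z]$, the constant function $1\in\C[z]$ satisfies $\int_{C(t)}1=\sum_{i=1}^m n_i=0$. Then for any putative solution $g$ we would have $\int_{C(t)}g = \sum_i n_i g(z_i(t))$; evaluating at a regular value and observing that the sum $\sum_i n_i g(z_i(t))$ is a symmetric-type expression that vanishes whenever $g$ is constant, one sees more directly: if $g$ is a solution and $c\in\C$, then $g+c$ is also a solution of the same equation because $\int_{C(t)}(g+c)=\int_{C(t)}g + c\sum n_i = \int_{C(t)}g$. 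This does not immediately give a contradiction, so instead I argue as follows. The key point is that $\int_{C(t)}g$ is a $\C$-linear functional in $g$ that kills constants when $C$ is a cycle; hence its image, as $g$ ranges over $\C[z]$, is contained in the span of the non-constant ``moment'' functions, and in particular one checks that $\int_{C(t)}g$ always has the property of being a polynomial in $t$ with no constant term cannot be the right framing either. The cleanest route: take $t_0$ a regular value and a small loop; since $\tau_\infty$ permutes the $z_i(t)$ cyclically and $\sum n_i = 0$, analytic continuation around infinity fixes $\int_{C(t)}g$, so $\int_{C(t)}g$ extends to a rational (indeed polynomial) function on $\mathbb{P}^1$; but the real obstruction is degree/growth at infinity, and comparing the growth of $\sum n_i g(z_i(t))$ (which is $o(|t|^{\deg g/m})$ after the cyclic cancellation forced by $\sum n_i=0$) with that of a nonzero polynomial $p(t)$ forces a contradiction unless one is more careful. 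I expect the author's argument is simply: if $C$ is a cycle then $g\mapsto \int_{C(t)}g$ and $g\mapsto\int_{C(t)}(g+1)$ coincide, so the value is independent of adding constants, and then one shows the map factors through $\C[z]/\C$; combined with the fact that a nonzero constant multiple of $p\circ f$ divided by $\sum n_i$ is undefined when $\sum n_i=0$, no particular solution can be corrected — this is exactly the content of the ``Moreover'' part read backwards.

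For the substantive ``if'' direction, assume $C(t)$ is not a cycle, so $\sum_{i=1}^m n_i\neq 0$. I would exhibit the particular solution $g_0 := \dfrac{p\circ f}{\sum n_i}$. Then, since $f(z_i(t))=t$ for every $i$,
\[
\int_{C(t)} g_0 = \sum_{i=1}^m n_i\,\frac{p(f(z_i(t)))}{\sum_k n_k} = \frac{p(t)}{\sum_k n_k}\sum_{i=1}^m n_i = p(t).
\]
Hence $g_0$ is a genuine polynomial solution, which also establishes existence. For the equivalence, given any $g\in\C[z]$ set $\tilde g := g - g_0 = g - \dfrac{p\circ f}{\sum n_i}$; by linearity of the $0$-dimensional Abelian integral,
\[
\int_{C(t)}\tilde g = \int_{C(t)}g - \int_{C(t)}g_0 = \int_{C(t)}g - p(t),
\]
so $\int_{C(t)}g\equiv p(t)$ if and only if $\int_{C(t)}\tilde g\equiv 0$. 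This is all that is claimed.

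The main obstacle is the ``only if'' direction: one must rule out the existence of \emph{any} polynomial $g$ with $\int_{C(t)}g\equiv p(t)\neq 0$ when $C$ is a cycle. The honest argument is that when $\sum n_i = 0$ the functional $g\mapsto\int_{C(t)}g$ annihilates the constants, hence annihilates $p\circ f$ (a constant multiple of which would be the natural candidate correction), and more importantly the value $\int_{C(t)}g$, viewed as a function of $t$, must vanish at $t=\infty$ after the substitution — concretely, writing $g(z)=\sum_{k=0}^{m-1}g_k(f)z^k$ and integrating, $\int_{C(t)}g = \sum_{k=0}^{m-1} g_k(t)\,\langle n, (z_1^k,\dots,z_m^k)\rangle$, and the $k=0$ term contributes $g_0(t)\sum n_i = 0$; thus only $k\geq 1$ terms survive, and these are combinations of non-constant power sums of the roots restricted to a proper sub-collection weighted by $n_i$, which — because $\tau_\infty$ acts on them without a nonzero invariant of the required form — cannot reproduce an arbitrary nonzero $p(t)$; in fact the correct statement is just that $p(t)$ must then lie in the image, and surjectivity onto $\C[t]$ fails exactly because the would-be preimage involves dividing $p\circ f$ by $\sum n_i = 0$. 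I would present this cleanly by showing the two equations $\int_{C(t)}g\equiv p(t)$ and $\int_{C(t)}g\equiv 0$ have the same solution set shifted by $g_0$ only when $g_0$ makes sense, and invoking that a cycle forces $\int_{C(t)}(\text{constant})=0$ so no constant adjustment can ever change the integral, making $p(t)\neq 0$ unattainable.
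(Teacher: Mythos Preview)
Your treatment of the ``if'' direction and the ``Moreover'' equivalence is correct and essentially identical to the paper's: exhibit the particular solution $g_0=\dfrac{p\circ f}{\sum n_i}$, check $\int_{C(t)}g_0=p(t)$ using $f(z_i(t))=t$, and subtract.

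The gap is exactly where you locate it: the ``only if'' direction. None of your attempts there actually closes. Observing that $g\mapsto g+c$ does not change the integral shows only that the solution set, if nonempty, is a coset of something containing constants; it does not show emptiness. The growth argument is not complete either: after reducing to $\deg g<m$ via $g=q\circ f+r$ (using $\sum n_i=0$ to kill $q(t)\sum n_i$), the quantity $\sum_i n_i r(z_i(t))$ can still be a nonzero constant a priori, so growth alone does not rule out $p(t)\equiv c\neq 0$. Your final attempt, writing $g=\sum_k g_k(f)z^k$ and noting the $k=0$ term drops, leaves you needing to show that the remaining terms $\sum_{k\ge 1} g_k(t)\sum_i n_i z_i^k(t)$ cannot equal a nonzero polynomial in $t$; you assert this but do not prove it, and the appeal to ``dividing by $\sum n_i=0$'' is circular (it only shows the particular candidate $g_0$ fails, not that every $g$ fails).

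The paper's argument is a one-line averaging over the monodromy group: if $\int_{C(t)}g\equiv p(t)$, then by analytic continuation $\int_{\sigma(C(t))}g\equiv p(t)$ for every $\sigma\in G_f$; summing over $G_f$ and using that $G_f$ acts transitively on $\{z_1,\ldots,z_m\}$ gives
\[
|G_f|\,p(t)=\sum_{\sigma\in G_f}\int_{\sigma(C(t))}g=\frac{|G_f|}{m}\Big(\sum_i n_i\Big)\sum_{k=1}^m g(z_k(t)),
\]
which forces $p(t)\equiv 0$ when $\sum_i n_i=0$. This is the missing idea; once you see it, the whole proposition is three lines.
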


\begin{proof}
Assume that  $\int_{C(t)} g \equiv p(t)$. Since $p(t)$ is invariant by the action of $G_f$, then 
\[
|G_f|\, p(t) = \sum_{\sigma\in G_f} \sigma(p(t)) = \sum_{\sigma\in G_f} \int_{\sigma(C(t))} g = 
\frac{|G_f|}{m} \sum_{k=1}^m \left( \sum_{i=1}^m n_i \right) g(z_k(t)).
\]
If $C(t)$ is a cycle, it follows $p(t)=0$, contrary to the assumption.

If $C(t)$ is not a cycle, then $\int_{C(t)}g\equiv p(t)$ is equivalent to 
\[
0\equiv \int_{C(t)}g-p(t)=\sum n_i \left( g(z_i(t))-\frac{p(f(z_i(t)))}{\sum n_i}\right)=\int_{C(t)} u.
\]
\end{proof}

\section{Solution for $2$-transitive, monomials and Chebyshev polynomials}

In this section we prove Theorem~\ref{theo:inductionsptep}, which 
we have divided into Propositions \ref{prop:xmT} and \ref{trantriv}.
That is, we shall solve the zero-dimensional tangential center problem
for the basis of induction, this is, when $f$ is a $2$-transitive polynomial, a monomial or a Chebyshev polynomial.
We need to consider chains instead of cycles for they appear in the induction process.

First assume that $f(z)=z^m$ or $f(z)=T_m(z)$, where $T_m=\cos(m\arccos(z))$ is the $m$-th Chebyshev polynomial, with $m$ not necessarily 
prime. Let $C(t)$ be a balanced cycle of $f$ (with real coefficents). Under the above assumptions we calculate $(V(t)^\perp)^*$  explicitly. 
The key point of the calculation resides in the fact that dual vectors $C_k^*$ of the chains $C_k$, 
are easily calculated in the monomial case. Similarly, in the Chebyshev case, 
the dual space $Vect(C_k,C_{m-k})^*$ of the inseparable space $Vect(C_k,C_{m-k})$ is easily calculated.

\begin{prop} \label{prop:xmT}
Let  $f(z)=z^m$ (resp. $f(z)=T_m(z)$) be a monomial function (resp. Chebyshev polynomial) and $C(t)$ a balanced chain of $f$. 
Then:
\begin{enumerate}
\item The $\mathbb{C}[t]$-module of solutions of the tangential center problem is given by
\[
U^*=U_{m}^*\oplus U_{d_1}^*\oplus\cdots\oplus U_{d_j}^*.
\]
where a basis of $U_1^*$ is given by $g_0(z)=1$ and a basis of $U_m^*$ is given by the functions $g_k(z)=z^k$ 
(resp. $g_k(z)=T_k(z)$) for $k$ relatively prime with $m$. For any $d_j$ divisor of $m$, a basis of $U_{d_j}^*$ is given by 
functions of the form $g_k(z)=z^k$ (resp. $g_k(z)=T_k(z)$) for $k$ a multiple of $m/d_j$, but not of any prime factors 
of $d_j$, i.e., $g.c.d.(m,k)=m/d_j$.

\item The above spaces $U_j^*$ are mutually orthogonal. Their dimensions are given by 
\[
dim_{\C} (U_1^*)=1, \quad dim_{\C} (U_m^*)=\phi(m), \quad dim_{\C} (U_{d_l}^*)=\phi(d_l).
\] 

\item The space of solutions of the tangential center problem is generated by 
$\{ g_j(z) = z^j \text{ (resp. } g_j(z)=T_j(z)) : \,\, \Phi_{m/g.c.d.(m,j)}(z) | P_C(z) \}$.
\end{enumerate}
\end{prop}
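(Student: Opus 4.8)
The plan is to leverage the general structural result of Lemma \ref{general} together with the explicit knowledge of the monodromy group of $f(z)=z^m$ (resp. $T_m$). For a monomial, $G_f$ is cyclic of order $m$ generated by $\tau_\infty=(1,2,\ldots,m)$, which acts on $\C^m$ diagonally with eigenvectors exactly the vectors $w_k=(1,\epsilon_m^k,\ldots,\epsilon_m^{(m-1)k})$. For the Chebyshev polynomial, $G_f$ is the dihedral group $D_m$, generated by $\tau_\infty$ and the reflection identifying $z_j$ with $z_{m-j}$, so that the $G_f$-invariant indecomposable real subspaces are the lines $\C w_0$, $\C w_{m/2}$ (if $m$ even) and the planes $\mathrm{Vect}(w_k,w_{m-k})$. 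In both cases the point is that the invariant subspaces $U_r$ of Lemma \ref{Pako} are spanned by the $w_k$ with $\gcd(m,k)=m/r$, and what is special here is that one can write down the \emph{dual} basis vectors explicitly: the dual of the chain $C_k=\bar w_k$ is realized by the honest polynomial function $z^k$ (resp. $T_k(z)$), because evaluating $z^k$ on the roots $z_j(t)=t^{1/m}\epsilon_m^j$ gives $t^{k/m}(1,\epsilon_m^k,\ldots,\epsilon_m^{(m-1)k})=t^{k/m}w_k$, i.e. $\int_{C_j}z^k = t^{k/m}\langle w_k,\bar w_j\rangle = m\,t^{k/m}\delta_{jk}$ (indices mod $m$), and similarly $T_k(z_j(t))$ is a fixed function of $t$ times $\cos(2\pi jk/m)$, the real combination $\tfrac12(w_k+w_{m-k})$.

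First I would set up coordinates: parametrize the roots as $z_j(t)=\zeta^{j}t^{1/m}$ with $\zeta=\epsilon_m$ in the monomial case, and $z_j(t)=\cos\big(\tfrac{2\pi j}{m}+\tfrac{1}{m}\arccos t\big)$ in the Chebyshev case, and verify the orthogonality relations $\int_{C_j}z^k \equiv c_k(t)\,\delta_{j,k\bmod m}$ (resp. the $2\times 2$ block version with $T_k$), where $c_k(t)\not\equiv 0$. This shows $z^k$ (resp. $T_k$) represents the dual functional $C_k^*$ on the span of all $C_j$, hence on any $G_f$-invariant subspace $V(t)$ containing $C_k$. Second, I would invoke Lemma \ref{general}: part (1) already identifies the module of solutions as $U^*=\bigoplus U_{d_l}^*$ with $U_{d_l}$ spanned by the $C_k$ with $\gcd(m,k)=m/d_l$, so that $U_{d_l}^*$ is spanned (via the identification above) by the corresponding monomials/Chebyshev polynomials; part (3) of Lemma \ref{general} gives the dimension count $\phi(d_l)$; and part (4) gives the characterization via $\Phi_{m/\gcd(m,k)}(z)\mid P_C(z)$, i.e. statement (3) of the proposition. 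The inclusion of $U_1^*=\langle g_0\rangle=\langle 1\rangle$ follows from $C(t)$ being a cycle, and the inclusion of $U_m^*$ follows from $C(t)$ being balanced, again by Lemma \ref{general}(2). Thus statements (1), (2), (3) transcribe directly.

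The one genuinely new thing to check, and the main obstacle, is that the abstractly-defined dual functionals $C_k^*$ are actually \emph{realized by polynomials in $z$} — and that the correspondence $C_k^*\leftrightarrow z^k$ (resp. $T_k$) respects the $\C[t]$-module structure, so that a basis of $U^*$ as a vector space over $\C$ lifts to a basis over $\C[t]$. This is where the concrete root formulas are essential: one must check that $z^k$ evaluated along any chain in $V(t)^\perp$ gives identically zero (so $z^k$ is a genuine solution), which amounts to the orthogonality computation above, and conversely that every solution $g=\sum_{k}g_k(f)z^k$ with $\int_{C(t)}g\equiv 0$ forces $g_k\equiv 0$ for $k$ outside the allowed index set — here one expands $\int_{C(t)}g = \sum_k g_k(t)\int_{C(t)}z^k$ and uses that the functions $t^{k/m}$ (resp. the relevant $t$-functions from $T_k$) are $\C[t]$-linearly independent modulo the constraint, so that vanishing of the sum forces each $g_k(t)\langle w_k,n(C)\rangle$-type term to vanish separately. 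A minor subtlety in the Chebyshev case is pairing $k$ with $m-k$: since $T_k=T_{m-k}$ modulo lower-order terms is false, one instead notes $z^k$ and $z^{m-k}$ both project onto the plane $\mathrm{Vect}(w_k,w_{m-k})$, and $T_k$ is exactly the combination landing on the real line fixed by the dihedral reflection, so that the real chain $C(t)$ (which has $w_k\in V$ iff $w_{m-k}\in V$) pairs with $T_k$ correctly; I would handle this by working throughout with the real $2$-dimensional blocks and the single basis vector $T_k$ of the $+1$-eigenspace of the reflection, exactly as in the discussion preceding Lemma \ref{general}. With these identifications in place, the proposition is just the specialization of Lemma \ref{general} to $G_f$ cyclic or dihedral, and no further argument is needed.
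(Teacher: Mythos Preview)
Your proposal is correct and follows essentially the same route as the paper: reduce to Lemma~\ref{general} for the abstract decomposition $U^*=\bigoplus U_{d_l}^*$ and the cyclotomic criterion, then verify by direct computation that $z^k$ (resp.\ $T_k$) realizes $C_k^*$ via the orthogonality $\int_{C_j}z^k=m\,t^{k/m}\delta_{jk}$, handling the Chebyshev case through the coupling of $w_j$ and $w_{m-j}$ for real chains. The paper makes the Chebyshev computation somewhat more explicit---writing $z_k(t)=\cos\big(\tfrac{1}{m}\arccos_k t\big)$ and expanding $T_j(z_k)$ via $\cos=\tfrac12(e^{i\theta}+e^{-i\theta})$ to obtain $\int_{C_k}T_j=\alpha_j(t)\langle w_j,w_k\rangle+\overline{\alpha_j(t)}\langle w_{m-j},w_k\rangle$---and also notes explicitly that $D(f)=D(m)$ (since $z^m=(z^{m/d})\circ z^d$ and $T_m=T_{m/d}\circ T_d$), which is what makes Lemma~\ref{Pako} produce a $U_r$ for \emph{every} divisor $r\mid m$; you use this fact implicitly but should state it.
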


\begin{proof}
Let $f(z)=z^m$ or $f(z)=T_m(z)$, $C(t)$ a balanced chain of $f$ and $V(t)$ the orbit of $C(t)$ defined as above. Let $D(m)$ be 
the set of divisors of $m$. Note that for any divisor $k\in D(m)$ one has $f=g_{m/k} \circ g_k$, where $g_l(z)=z^l$ (resp. 
$g_l(z)=T_l(z))$. This shows that in the two particular cases for $f$ a monomial or a Chebyshev polynomial, 
the set $D(f)$ coincides with 
$D(m)$, since the decompositions of $f$ are given by the divisors of $m$. By Lemma \ref{general}, we now know the complete 
decomposition of the space of solutions in $G_f$-invariant spaces, which is the same in both cases. We have to calculate the 
dual spaces of each of the direct summands of $V(t)^{\perp}$.

Let $g_l(z)= z^l$ and $z_j(t)= t^{1/m} \epsilon_m^{j-1}$, $j=1,\ldots,m$, for $\epsilon_m$ a primitive $m$-th root of unity. 
Then 
\[
\begin{split}
\int_{C_k} g_l & = \sum_{j=1}^{m} \overline{w_{k,j}} g_l(z_j(t)) = \sum_{j=1}^{m} \overline{\epsilon_m^{k(j-1)}} t^{l/m} 
\epsilon_m^{l(j-1)} \\ & = t^{l/m} \sum_{j=1}^{m} \overline{\epsilon_m^{k(j-1)}} \epsilon_m^{l(j-1)} = t^{l/m} <w_l,w_k>\\ 
& = m t^{l/m} \delta_{lk}.
\end{split}
\]
This shows that 
\[
g_l=c_l C_l^*,
\] 
where $c_l$ is a nonzero constant. The claim in the monomial case follows now from Lemma \ref{general}
(note that the inequalities of the dimensions must be equalities in this case).

\medskip
Consider now the Chebyshev case. Let $T_m(z)= \cos(m \arccos (z))$ be the $m$-th Chebyshev polynomial, which is a polynomial 
of degree $m$. If we take $f(z)=T_m(z)$, then for any $t \in \C$, $f(z)=t$ gives $m$ preimages
\[
z_k(t) = (T_m^{-1}(t))_k = \cos \left( \dfrac{1}{m} \arccos_k(t) \right),  
\]
where we choose the range of $\arccos_k$ in $[0, 2\pi) + 2 (k-1) \pi$ (indeed $\arccos_k(t)=\arccos_1(k)+2(k-1)\pi$,
see Figure~\ref{fig:1}). Note that a loop of $t$ around infinity transforms $\arccos_k(t)$ into $\arccos_{k+1}(t)$
(see Figure~\ref{fig:2}). 

\begin{figure}[h]
\begin{center}
{\scalebox{.4}{\includegraphics{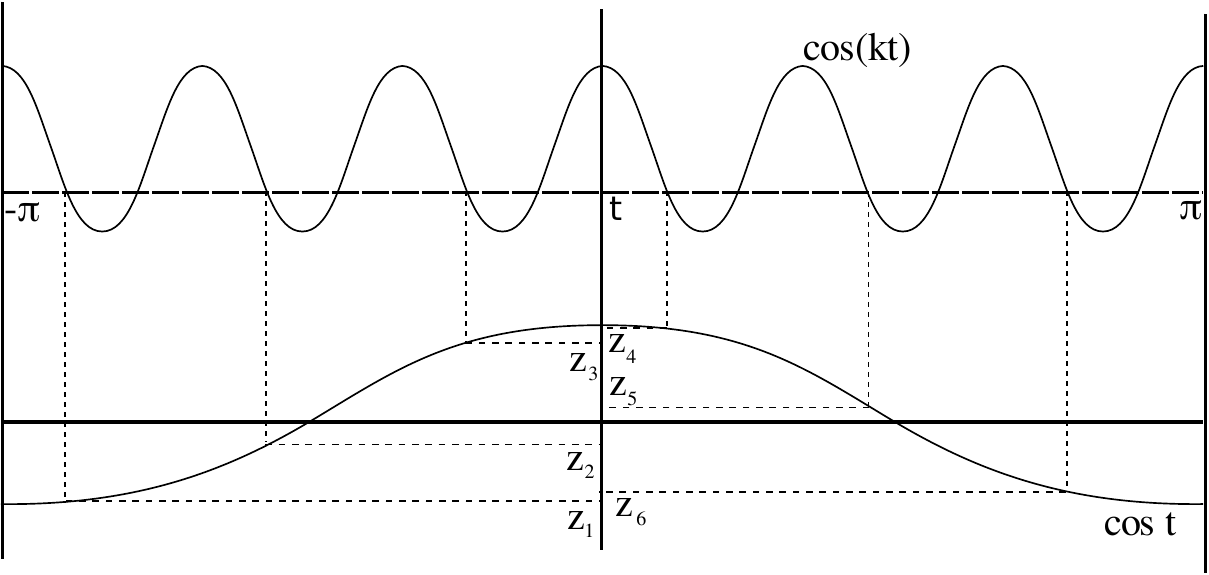}}}
\end{center}
\caption{Computation of $z_k(t)$}\label{fig:1}
\end{figure}

The Chebyshev polynomials $T_0,\ldots T_{m-1}$ form a basis of the space of polynomials as a $\C[t]$-module.

\begin{figure}[h]
\begin{center}
{\scalebox{.4}{\includegraphics{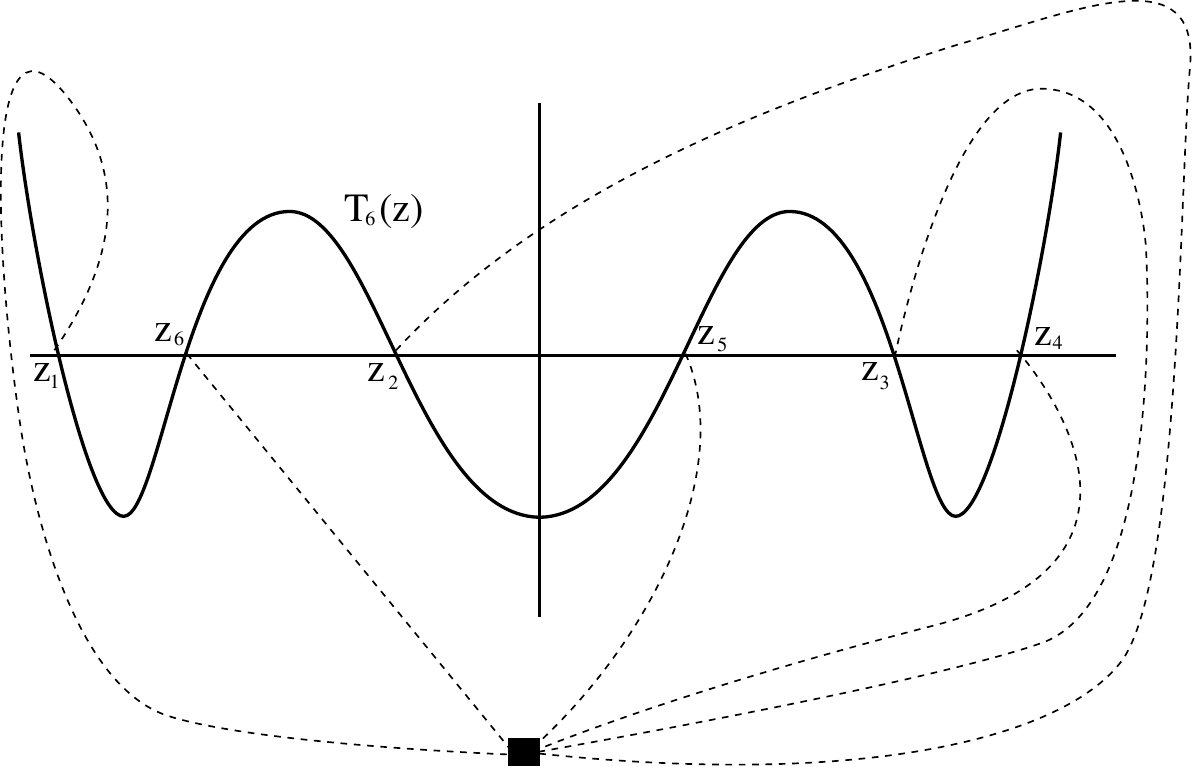}}}
\end{center}
\caption{Monodromic action of a rotation around infinity}\label{fig:2}
\end{figure}

If we take $t \in \R$ and denote $\xi(t) = \arccos_1(t) \in \R$,  $\epsilon_m = e^{i 2\pi/m}$, then for any chain $C(t)$ we 
obtain
\begin{equation}\label{intct}
\begin{split}
\int_{C(t)} T_j(z) & = \sum_{k=1}^m n_k T_j(z_k) = \sum_{k=1}^m n_k \cos\left( \dfrac{j}{m} \arccos_k(t) \right) \\ 
& =  \sum_{k=1}^m n_k \cos \left(  \xi(t) \dfrac{j}{m} + 2(k-1) \pi \dfrac{j}{m} \right) \\ 
& = \sum_{k=1}^m n_k \left(  \frac{e^{i \xi(t) \frac{j}{m}}}{2} e^{i2(k-1) \pi \frac{j}{m}} 
+ \frac{e^{-i \xi(t) \frac{j}{m}}}{2} e^{-i2(k-1) \pi \frac{j}{m}}  \right) \\
& = \frac{e^{i \xi(t) \frac{j}{m}}}{2} \left( \sum_{k=1}^m n_k \epsilon_m^{j(k-1)} \right) + 
\frac{e^{-i \xi(t) \frac{j}{m}}}{2} \left( \sum_{k=1}^m n_k \epsilon_m^{-j(k-1)} \right) \\
& = \frac{e^{i \xi(t) \frac{j}{m}}}{2} P_C (\epsilon_m^j) + 
\frac{e^{-i \xi(t) \frac{j}{m}}}{2} P_C(\overline{\epsilon_m}^j) \\
& = \alpha_j(t) P_C (\epsilon_m^j) + \overline{\alpha_j(t)} P_C(\overline{\epsilon_m}^j) ,\\
\end{split}
\end{equation}
where $\alpha_j(t) = \dfrac{e^{i \xi(t) \frac{j}{m}}}{2}$. If the chain $C(t)$ has real coefficients, this gives
\[
\int_{C(t)}T_j(z)= 2 Re( \alpha_j(t) P_C (\epsilon_m^j) ),
\]
due to the fact that $P_C(z) \in \R[z]$.

Consider in particular the chain $C_k$. Note that $P_{C_k}(\epsilon_m^j) = <w_j,w_k>$. Hence \eqref{intct} gives
\[
\int_{C_k(t)} T_j = \alpha_j(t) <w_j,w_k> + \overline{\alpha_j(t)} <w_{m-j},w_k>.
\]
It follows that $\int_{C_k(t)} T_j=0$ if $k \not \in \{j,m-j\}$.

On the other hand, as our balanced cycle $C(t)$ is real, it follows that the polynomial 
$P_C(z)$ has real coefficients and hence $<w_{m-j}, C> = P_C(\epsilon_m^{m-j}) = P_C(\epsilon_m^{-j}) = P_C(\overline{\epsilon_m^j}) = 
\overline{P_C(\epsilon_m^j)} = \overline{<w_j, C>}$. This means that $w_j\in V(t)^\perp$, if and only if $w_{m-j} \in V(t)^{\perp}$. It 
follows that the two-dimensional dual space of the space generated by $w_j$ and $w_{m-j}$ is the space generated by $T_j$ and $T_{m-j}$. 
The claim now follows as in the monomial case from Lemma \ref{general}.
\end{proof}

\begin{exam} 
Let $f(z)=z^6$. Then $D(f)=\{1,2,3,6\}$ and using Lemma \ref{Pako} we get 
$H_0(f^{-1}(t)) \equiv \C^6 = U_1 \oplus U_2 \oplus U_3 \oplus U_6$, 
where $U_1 = <C_0>$, $U_2 = <C_3>$, $U_3 = <C_2,C_4>$, $U_6 = <C_1,C_5>$. 
Let $C$ be a cycle of $f$, $V$ its orbit and $V^\perp$ the orthogonal complement. 
By the cycle condition $U_1 \subset V^\perp$. 
 
Assume that the cycle $C(t)$ is balanced. This is equivalent to assuming 
$U_6 \subset V^\perp$. Now various balanced cycles can be considered. 
For instance, if $C$ is such that $V=U_2$, then the space of solutions $g$ as a 
$\mathbb{C}[t]$-module is generated by $\{1,z, z^2,z^4,z^5\}$. If $V=U_3$, then 
the solution is generated by $\{1,z,z^3,z^5\}$.
\end{exam}

By the Burnside-Schur Theorem, if $f$ is primitive and it is not in the cases above, then it is $2$-transitive. The unbalanced case is solved 
in Theorem \ref{theo:abm}; therefore only the balanced case remains. The following result solves the tangential center problem for a balanced 
cycle and a $2$-transitive $f$.

\begin{prop}\label{trantriv}
Let $f\in\C[z]$ be a polynomial with a $2$-transitive monodromy group and $C(t)=\sum_{j=1}^m n_jz_j(t)$ a chain of $f$.
\begin{enumerate}
\item If $C(t)$ is a balanced chain, then there exists $n$ such that $n_j=n$ for every $1\leq j\leq m$. 
In particular if $C(t)$ is a balanced cycle, then it is trivial.
\item If $C(t)$ is a balanced chain, then $\int_{C(t)}g\equiv 0$  if and only if
\[
\sum_{k=1}^{deg(g)} s_k g_k = 0,
\]
where  
$g(z)=\sum g_k z^k$ and $s_k$ are given by the Newton-Girard formulae, as
\[
s_k=\sum_{i=1}^{m} z_i^k(t).
\]
\end{enumerate} 
\end{prop}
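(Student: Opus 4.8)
The plan is to analyze the action of $G_f$ on the coefficient vector $n(C) = (n_1,\ldots,n_m)$ using the hypothesis that $C(t)$ is balanced, i.e., $\sum_{i=1}^m n_{\sigma(i)}\epsilon_m^i = 0$ for every $\sigma \in G_f$.

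For part (1), the key observation is that $2$-transitivity gives strong constraints on which permutations lie in $\Gamma_m(f)$. First I would fix the $m$-cycle $\tau_\infty = (1,2,\ldots,m)$ and note that the balancedness condition applied to $\tau_\infty$ itself says $\sum_j n_j \epsilon_m^j = 0$, i.e., $P_C(\epsilon_m) = 0$ (after the trivial reindexing by a power of $\epsilon_m$). Then I would exploit the $2$-transitivity to produce, for any transposition-type modification, another element of $\Gamma_m(f)$: since $G_f$ is $2$-transitive it contains $\sigma$ sending any ordered pair to any other ordered pair, so the conjugates $\sigma \tau_\infty \sigma^{-1}$ realize a rich family of $m$-cycles. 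Applying balancedness to enough of these conjugates, I would set up a linear system on the $n_j$ whose only solution is $n_1 = \cdots = n_m$. Concretely: comparing the balance relation for $\tau_\infty$ with that for a conjugate that swaps two adjacent entries of the cycle notation should force the corresponding difference $n_a - n_b$ (times a nonzero power of $\epsilon_m$ difference) to vanish, and $2$-transitivity guarantees we can reach every pair $(a,b)$ this way. Once all $n_j$ are equal to a common $n$, if $C(t)$ is moreover a cycle then $\sum n_i = mn = 0$ forces $n=0$, so $C(t)$ is trivial.

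For part (2), once we know $C(t)$ is a balanced chain with all $n_j = n$, the orbit $V(t)$ under $G_f$ is spanned by $n \cdot (1,1,\ldots,1) = n\, w_m$, i.e., $V(t) = \C w_m$ (assuming $n \neq 0$; if $n = 0$ then $C = 0$ and everything is trivial). So the tangential center condition $\int_{C(t)} g \equiv 0$ is equivalent to $\int_{\sigma(C(t))} g \equiv 0$ for all $\sigma$, but since $\sigma$ fixes $w_m$ this is just $n \sum_{k=1}^m g(z_k(t)) \equiv 0$, i.e., $\sum_{k=1}^m g(z_k(t)) = 0$. Writing $g(z) = \sum_k g_k z^k$, linearity gives $\sum_k g_k \sum_{i=1}^m z_i^k(t) = \sum_k g_k s_k$, where $s_k$ is the $k$-th power sum. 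By Newton--Girard, $s_k$ is a constant (independent of $t$) for $k \le m-1$; and for $k \ge m$ one reduces $z^k$ modulo $f(z) - t$, but since $g$ is taken with $\deg g < \deg f$ in the statement's convention this does not arise — or, more cleanly, one notes $\sum_i z_i^k(t)$ is a polynomial in $t$ and the vanishing must hold identically, which (using the $\C[t]$-module structure) reduces to the statement about the coefficients. So $\int_{C(t)} g \equiv 0$ iff $\sum_k s_k g_k = 0$, which is the claim. (The $k=0$ term contributes $m g_0$, but this is absorbed since $s_0 = m$ and a constant $g_0$ is never a solution unless... — actually the cycle vs. chain distinction matters here: for a balanced chain with $n \ne 0$, the constant is not a solution, matching $s_0 = m \ne 0$; the statement's sum starting at $k=1$ presupposes we have already imposed, or do not need, the $g_0$ term, so I would state it carefully, perhaps noting $g_0$ must also satisfy $m g_0 + \sum_{k\ge1} s_k g_k = 0$.)

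The main obstacle I anticipate is part (1): extracting from $2$-transitivity a clean enough family of conjugates of $\tau_\infty$ in $\Gamma_m(f)$ to pin down all the $n_j$ simultaneously. The subtlety is that conjugating $\tau_\infty$ by an arbitrary $\sigma \in G_f$ permutes the \emph{entries} of the cycle but one must carefully track how the coefficients $n_j$ get reindexed — the balance relation becomes $\sum_i n_{\sigma(i)} \epsilon_m^i = 0$, and one needs enough distinct $\sigma$ (guaranteed by $2$-transitivity) so that the resulting homogeneous linear system in $(n_1,\ldots,n_m)$ has rank $m-1$ with kernel exactly $\C(1,\ldots,1)$. I would handle this by taking differences of the relations for $\sigma$ and for $\sigma$ composed with a transposition that $2$-transitivity provides inside $G_f$, reducing to showing $(\epsilon_m^a - \epsilon_m^b)(n_{\sigma(a)} - n_{\sigma(b)}) = 0$ for suitable positions, then varying over all pairs. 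The power-sum/Newton--Girard bookkeeping in part (2) is routine by comparison, and the cycle-triviality corollary is immediate.
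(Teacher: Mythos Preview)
Your treatment of part~(2) is essentially the same as the paper's and is correct once part~(1) is established.

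The gap is in part~(1). Your plan is to compare the balance relation for $\tau_\infty$ with that for a conjugate $\sigma\tau_\infty\sigma^{-1}$ that ``swaps two adjacent entries of the cycle notation'', and more generally to take differences of relations for $\sigma$ and for $\sigma$ composed with ``a transposition that $2$-transitivity provides inside $G_f$''. But $2$-transitivity does \emph{not} provide transpositions: a $2$-transitive group need not contain any transposition (for instance $A_m$ for $m\ge 5$, or $PSL_2(\mathbb{F}_p)$ acting on the projective line). Consequently, when you conjugate $\tau_\infty$ by an element $\sigma\in G_f$ that you can only control on two points, the resulting $m$-cycle $(\sigma(1),\ldots,\sigma(m))$ differs from $(1,\ldots,m)$ in \emph{all} the remaining positions as well, and the difference of the two balance relations does not isolate a single term $(\epsilon_m^a-\epsilon_m^b)(n_{\sigma(a)}-n_{\sigma(b)})$. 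Your linear system is the right object, but the rank argument you sketch does not go through.

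The paper's device is different and avoids this issue entirely: rather than comparing individual conjugates, it \emph{averages} the balance relation $\sum_j n_{\sigma(j)}\epsilon_m^j=0$ over the stabilizer $H_1=\{\sigma\in G_f:\sigma(1)=1\}$. Two-transitivity is exactly the statement that $H_1$ acts transitively on $\{2,\ldots,m\}$, so in the sum $\sum_{\sigma\in H_1}\sum_{j\ge 2} n_{\sigma(j)}\epsilon_m^j$ each $n_k$ with $k\ge 2$ occurs the same number of times, namely $|H_1|/(m-1)$. Using $\sum_{j=2}^m\epsilon_m^j=-\epsilon_m$, the averaged relation collapses to
\[
|H_1|\,\epsilon_m\Big(n_1-\tfrac{1}{m}\sum_{k=1}^m n_k\Big)=0,
\]
forcing $n_1$ to equal the average of the $n_k$; since the index $1$ was arbitrary, all $n_j$ coincide. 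This averaging trick is the missing idea in your outline.
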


\begin{proof}
(1) Let us assume that $C(t)$ is balanced. It means that $\sum_{j=1}^m n_{\sigma(j)}\epsilon_m^j=0$, for any $\sigma\in G_f$. Let $H_1 = \{ \sigma 
\in G_f| \sigma(1)=1\}$ be the stabilizer of $z_1(t)$. Then 
\[
\sum_{\sigma\in H_1}\sum_{j=1}^m n_{\sigma(j)}\epsilon_m^j=0.
\]
That is,
\begin{equation}\label{sum}
|H_1| n_1 \epsilon_m + \sum_{j=2}^m \sum_{\sigma\in H_1}n_{\sigma(j)}\epsilon_m^j=0.
\end{equation}
Now the assumption that $G_f$ is $2$-transitive and $H_1$ is the stabilizer of $z_1(t)$ implies that $H_1$ acts transitively on $z_2(t),\ldots,z_m(t)$. Hence, for each $j\in\{2,\ldots,m\}$ and each $k\in\{2,\ldots,m\}$ there is the same number of occurrences of $n_k$ in the sum $\sum_{\sigma\in H_1}n_{\sigma(j)}\epsilon_m^j$. This number is $\frac{|H_1|}{m-1}$ and 
\[
\sum_{j=2}^m \sum_{\sigma\in H_1} n_{\sigma(j)} \epsilon_m^j =
\frac{|H_1|}{m-1} \sum_{k=2}^m n_k \sum_{j=2}^m \epsilon_m^j=
\frac{|H_1|}{m-1} \left(-n_1 + \sum_{k=1}^m n_k \right) \sum_{j=2}^m \epsilon_m^j.
\] 
Observing that $\sum_{j=2}^m\epsilon_m^j=-\epsilon_m$, by Equation~(\ref{sum}) we get
\[
\begin{split}
0 & = |H_1| n_1 \epsilon_m - \frac{|H_1|}{m-1} \epsilon_m \left(-n_1 + \sum_{k=1}^m n_k\right)\\ 
& = |H_1| \epsilon_m \left( n_1 + \frac{n_1}{m-1} - \frac{\sum_{k=1}^m n_k}{m-1} \right)\\ 
& = \frac{|H_1|}{m-1} \epsilon_m \left( m n_1 - \sum_{k=1}^m n_k \right).
\end{split}
\]
Since $|H_1| \ne 0$ and $\epsilon_m \ne 0$, then 
\[
n_1 = \frac{\sum_{k=1}^m n_k}{m}.
\]

The choice of index $1$ was arbitrary. Hence
 $n_j$ is a constant not depending on $j$. If 
$C(t)$ is a cycle, then this constant must be zero.

(2) Let us compute the solutions of $\int_{C(t)}g\equiv 0$, with $g(z)=\sum g_k z^k$, $g_k \in \C$. By (1), we can assume that $C(t)=n\sum z_i(t)$.
Then
\[
\int_{C(t)} g = n \sum_{i=1}^m g(z_i(t)) = n \sum_{i=1}^m \sum_{k=1}^{\deg(g)} g_k z_i^k(t) = n \sum_{k=1}^{\deg(g)} g_k \sum_{i=1}^m z_i^k(t) 
\equiv 0,
\]
and the solution follows from the Newton-Girard formulae.
Recall that Newton-Girard formulae express explicitly $s_k(t)=\sum z_i^k(t)$ in function of the coefficients of the polynomial $f(z)-t$.
\end{proof}

\section{Monodromy group of imprimitive polynomials}\label{sec:mon}

In the next section we will prove Theorems~\ref{theo:main} and \ref{theo:main2}. We shall assume that Hypothesis~\ref{hyp} holds, which,
as we prove in this section, will allow 
us to write the monodromy group of $f$ as a semidirect product defined by the monodromy groups of $\tilde f$ and $h$.

First, we introduce a new numbering in the preimages of $t$ by $f$. Fix a regular value $t$ of $f$ and take the preimage by 
$\tilde f$ of $t$. We obtain $m/d$ points, $w_1(t),\ldots,w_{m/d}(t)$. For each $w_i(t)$, let $z_{i,j}(t)$, $j=1,\ldots d$ 
denote each of the preimages of $w_i(t)$ by $h$.

Then, according to what we saw in Section~\ref{Section:1}, the blocks of the imprimitivity system associated to $f= \tilde f 
\circ h$ are $B_i = \{ z_{i,j}(t): j=1,\ldots,d \}$, $i=1,\ldots,m/d$. They correspond to rows of circles in Figure~\ref{mono}.

Differentiating $f=\tilde f \circ h$, we obtain $f'(z)=\tilde f'(h(z))h'(z)$. Thus, critical points of $f$ correspond to 
either the preimage by $h$ of critical points of $\tilde f$ or critical points of $h$. Let us denote 
\[
\{z\colon f'(z)=0\}=\{a_1,\ldots,a_{d(m/d-1)},b_1,\ldots,b_{d-1}\},
\]
where $\tilde f'(h(a_i))=0$, $h'(b_i)=0$. 

Let $\alpha_i$ denote the permutation associated to $f(a_i)$ and $\beta_i$ the permutation associated to $f(b_i)$. Each 
permutation $\alpha_i$ (resp. $\beta_i$) corresponds to winding counter-clockwise around only one critical value $f(a_i)$ 
(resp. $f(b_i)$) along a closed path.

\begin{figure}[ht]
\begin{center}
{\scalebox{.15}{\includegraphics{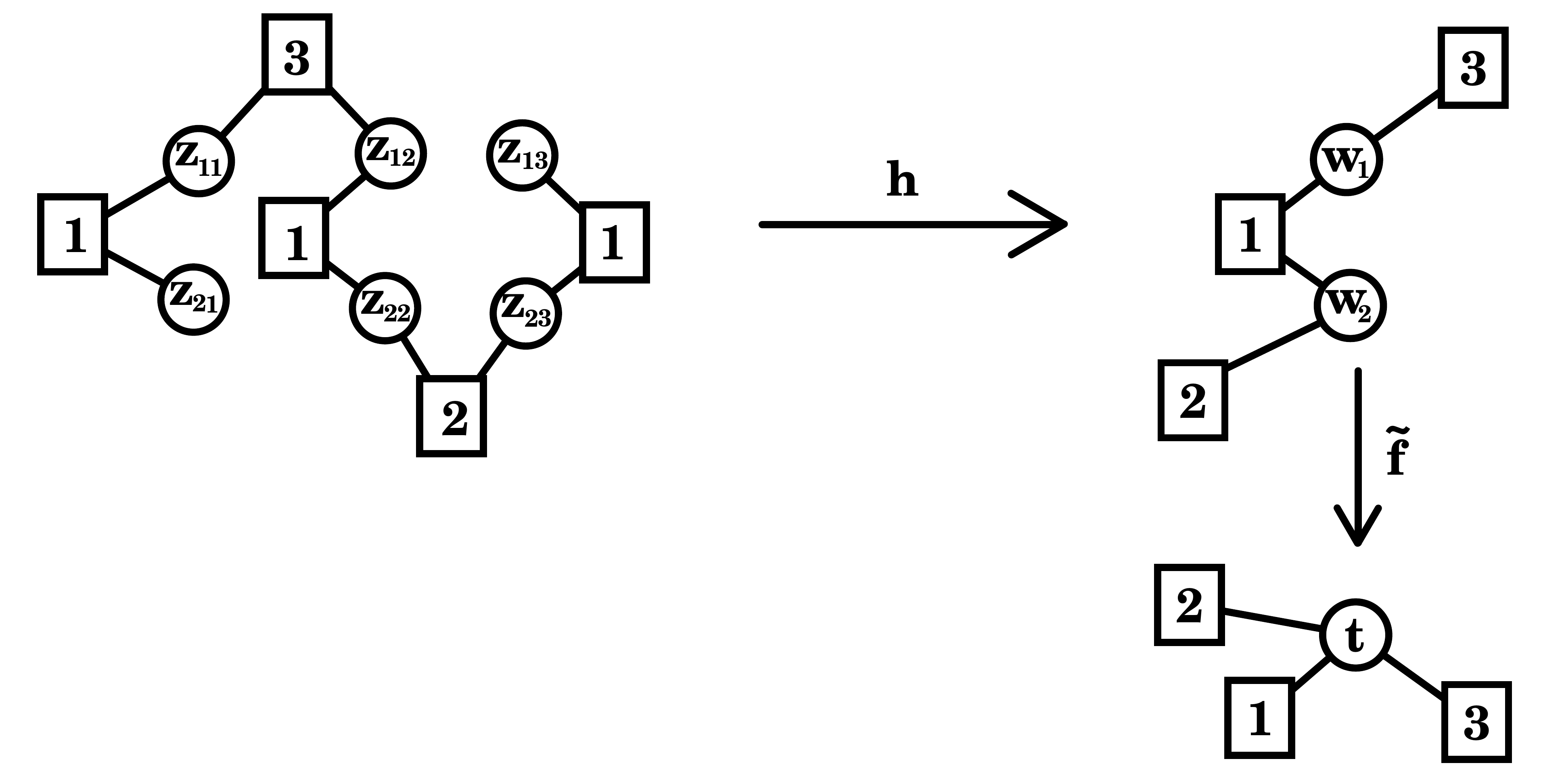}}}
\end{center}
\caption{Regular and critical points/values of imprimitive polynomials}\label{mono}\end{figure}

Note that by assumption the paths giving $\beta_i$ lift to closed paths (loops) based at whatever $w_i(t)$ we take as starting 
point as they encircle no critical value of $\tilde f$. Similar claim is valid for $\alpha_i$. This gives 
\begin{equation}\label{ab}
\alpha_k(z_{i,j})=z_{\alpha_k(i),j},\quad  \beta_k(z_{i,j})=z_{i,\beta_k(i,j)},
\end{equation}
so that $\alpha_k$ exchange whole blocks, while $\beta_k$ only moves elements inside every block.

Thanks to Hypothesis \ref{hyp}, we can split the monodromy group $G_f$ of $f$ in terms of the monodromy group of $\tilde f$ 
and a normal subgroup of $G_f$ which contains (a subgroup isomorphic to) the monodromy group of $h$.

\begin{lema}\label{lema:semidirect}
Assume that Hypothesis~\ref{hyp} holds. Let us denote 
\[
G_{\tilde f}=<\alpha_k>,\quad N_h=<\alpha_i \beta_k \alpha_i^{-1}\colon i,k>.
\] 
Then $G_f$ is the semidirect product $N_h \rtimes_{\phi} G_{\tilde f}$ of $N_h$ and $G_{\tilde f}$ with respect to $\phi: G_{\tilde f} \to Aut(N_h)$, $\phi(\alpha)=\phi_{\alpha}$, where $\phi_{\alpha}(\sigma)= \alpha \sigma \alpha^{-1}$.
\end{lema}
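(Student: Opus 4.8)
The plan is to verify the semidirect product decomposition $G_f = N_h \rtimes_\phi G_{\tilde f}$ by checking, in order: (i) $N_h$ is a normal subgroup of $G_f$; (ii) $N_h$ and $G_{\tilde f}$ generate $G_f$; (iii) $N_h \cap G_{\tilde f} = \{\id\}$; and (iv) the conjugation action of $G_{\tilde f}$ on $N_h$ is exactly $\phi$. Item (iv) is immediate from the definition $\phi_\alpha(\sigma) = \alpha\sigma\alpha^{-1}$, so the work is in (i)--(iii). Throughout I would use the explicit description \eqref{ab}: writing each root as $z_{i,j}$ with $i\in\{1,\dots,m/d\}$ the block index and $j\in\{1,\dots,d\}$ the position inside the block, the generators $\alpha_k$ permute the block indices $i$ alone (acting as the corresponding generator of $G_{\tilde f}$ on the $w_i$), while each $\beta_k$ fixes every block index and only moves elements within blocks. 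The key structural consequence of Hypothesis~\ref{hyp}, condition (1), is that the paths producing the $\beta_k$ encircle no critical value of $\tilde f$, hence lift to genuine loops based at \emph{any} $w_i(t)$; this is what makes $\alpha_i\beta_k\alpha_i^{-1}$ a well-defined monodromy element and, more importantly, gives that $N_h$ consists precisely of those elements of $G_f$ that fix every block setwise (i.e. lie in the kernel of the natural map $G_f \to G_{\tilde f}$ induced by $f = \tilde f\circ h$).

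For (i): $G_f$ is generated by the $\alpha_k$ and the $\beta_k$. Since $N_h$ is by definition generated by all conjugates $\alpha_i\beta_k\alpha_i^{-1}$, and since any product of the $\alpha$'s permutes blocks while the $\beta$'s preserve blocks, a direct computation with \eqref{ab} shows $\alpha_\ell\big(\alpha_i\beta_k\alpha_i^{-1}\big)\alpha_\ell^{-1}$ again lies in $N_h$ (it is the generator attached to the block $\alpha_\ell\alpha_i(\text{starting block})$), and $\beta_\ell\big(\alpha_i\beta_k\alpha_i^{-1}\big)\beta_\ell^{-1} \in N_h$ because both factors preserve blocks and on each block the relevant relations hold — here one uses that the subgroup of permutations preserving all blocks is the kernel of $G_f \to G_{\tilde f}$, which is normal as a kernel, together with the fact that this kernel is generated exactly by the $\alpha_i\beta_k\alpha_i^{-1}$. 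Cleanly, I would prove directly: $N_h = \ker(G_f \to G_{\tilde f})$, where the map sends a monodromy element to its induced action on $\{w_1,\dots,w_{m/d}\}$; normality is then automatic. One inclusion ($N_h \subseteq \ker$) is clear from \eqref{ab}; the reverse inclusion follows because modulo $N_h$ every $\beta_k$ is trivial and every $\alpha_k$ maps to the corresponding generator of $G_{\tilde f}$, so $G_f/N_h \cong G_{\tilde f}$ and anything in the kernel is a product of conjugates of the $\beta_k$ by elements of $G_f$, each of which lies in $N_h$.

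For (ii): $G_f = \langle \alpha_k, \beta_k\rangle$ and $\beta_k = \alpha_i\beta_k\alpha_i^{-1}$ for the appropriate $i$ (or a product), so $\beta_k \in N_h$; hence $\langle N_h, G_{\tilde f}\rangle \supseteq \langle \alpha_k, \beta_k\rangle = G_f$. For (iii): if $\sigma \in N_h \cap G_{\tilde f}$, then on one hand $\sigma \in N_h = \ker(G_f \to G_{\tilde f})$ fixes every block index, and on the other hand $\sigma \in G_{\tilde f} = \langle \alpha_k\rangle$ acts on roots purely through block indices (it is determined by, and equals, its image in $G_{\tilde f}$). Here condition (2) of Hypothesis~\ref{hyp} enters: it guarantees that the map $G_{\tilde f} = \langle \alpha_k \rangle \to G_{\tilde f}^{\mathrm{abstract}}$ is an isomorphism (no collapsing of the would-be lift), so an element of $\langle\alpha_k\rangle$ whose image in $G_{\tilde f}$ is trivial must itself be trivial. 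Combining, $\sigma = \id$.

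The main obstacle I anticipate is item (iii), specifically making precise that $G_{\tilde f} = \langle \alpha_k\rangle$ embeds in $G_f$ as a genuine complement rather than merely a generating set — i.e. that the assignment $\alpha_k \mapsto (\text{generator of }G_{\tilde f}\text{ acting on }w_i)$ extends to a group \emph{morphism} (this is flagged in the text right after Definition~\ref{dontmerge} as the role of condition (2)) and that this morphism is injective. The subtlety is that a priori $\langle\alpha_k\rangle$ could satisfy extra relations inside $G_f$ forcing its image in $G_{\tilde f}$ to be a proper quotient; condition (2) of Hypothesis~\ref{hyp}, that $\tilde f$ is injective on the critical values of $h$, is exactly what prevents the critical values of $\tilde f$ from being "hidden" behind those of $h$ and thus ensures the $\alpha_k$ reproduce the monodromy of $\tilde f$ faithfully. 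Once this faithfulness and the identification $N_h = \ker(G_f\to G_{\tilde f})$ are established, the semidirect product structure and the formula for $\phi$ are formal.
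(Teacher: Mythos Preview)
Your approach is sound and reaches the same conclusion, but it is organized differently from the paper's. The paper never invokes the kernel of the block-action map; instead it establishes the factorization $G_f = N_h \cdot G_{\tilde f}$ by an explicit word-rewriting: given any word in the generators $\alpha_i,\beta_j$, one repeatedly inserts $(\prod\alpha)^{-1}(\prod\alpha)$ to push all the $\alpha$'s to the right, leaving a product of $G_{\tilde f}$-conjugates of products of $\beta$'s (hence in $N_h$) times an element of $G_{\tilde f}$. Together with $N_h\cap G_{\tilde f}=\{\id\}$ (declared ``obvious''), uniqueness of the factorization follows and $\tau\alpha \mapsto (\tau,\alpha)$ is the desired isomorphism. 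Your kernel route is more conceptual and amounts to the same structure, but watch the small circularity in step~(i): you form $G_f/N_h$ in order to argue $\ker \subseteq N_h$, which already presupposes that $N_h$ is normal---the very fact you then want to deduce from $N_h=\ker$. This is harmless once one reads $N_h$ as the normal closure of $\langle\beta_k\rangle$ in $G_f$ (as the paragraph after Definition~\ref{dontmerge} says), so that normality is built in; make that explicit.

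One correction on~(iii): the trivial intersection is simpler than you make it, and you misattribute the relevant hypothesis. From \eqref{ab} alone, every element of $\langle\alpha_k\rangle$ acts as $(i,j)\mapsto(\sigma(i),j)$ and every element of $N_h$ as $(i,j)\mapsto(i,\tau_i(j))$; a common element therefore fixes both coordinates and is the identity. No appeal to condition~(2) of Hypothesis~\ref{hyp} is needed here, and the paper does not use it in this lemma. Condition~(2)---that $\tilde f$ is injective on the critical values of $h$---ensures that distinct critical values of $h$ produce distinct loops and hence distinct generators $\beta_k$; it governs the embedding of $G_h$ into $G_f$, not the faithfulness of $\langle\alpha_k\rangle$ on blocks. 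In your final paragraph the roles of conditions~(1) and~(2) are reversed.
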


\begin{proof}
First, we define the group semidirect product $N_h \rtimes_{\phi} G_{\tilde f}$ as the cartesian product set $N_h \times G_{\tilde f}$ with 
the following operation defined by $\phi$:
\[
(\sigma, \alpha) (\tilde{\sigma}, \tilde{\alpha}) = ( \sigma \phi_{\alpha} (\tilde{\sigma}) , \alpha \tilde{\alpha} ) = 
(\sigma \alpha \tilde{\sigma} \alpha^{-1},  \alpha \tilde{\alpha}).
\]
Obviously $N_h \cap G_{\tilde f} = \{ Id \}$. Hence, if we prove that any element $\sigma \in G_f$ is written in the form $\sigma = \tau 
\alpha$, where $\tau \in N_h$ and $\alpha \in G_{\tilde f}$, then this decomposition is unique (if $\tau \alpha = \tilde{\tau} \tilde{\alpha}$, 
then $\tilde{\tau}^{-1} \tau = \tilde{\alpha} \alpha^{-1} \in N_h \cap G_{\tilde f} = \{ Id \}$ and $\tau = \tilde{\tau}$, $\alpha = 
\tilde{\alpha}$) and we will have the group isomorphism
\[
\begin{matrix}
G_f & \longrightarrow & N_h \rtimes G_{\tilde f} \\
\sigma = \tau \alpha & \mapsto & (\tau, \alpha)
\end{matrix}
\]

Since $G_f$ is generated by the permutations $\{ \alpha_i, \beta_j: i=1, \ldots,d(m/d -1), j=1, \ldots, d-1 \}$, every element $\sigma$ in 
$G_f$ is a finite product of $\alpha$'s and $\beta$'s. In order to write $\sigma$ as the product of a permutation $\tau$ in $N_h$ and a 
permutation $\alpha$ in $G_{\tilde f}$, we group together the $\alpha$'s and $\beta$'s that appear in the expression of $\sigma$ in the 
following way:
\[
\sigma = \prod_{i \in I^{\beta}_1} \beta_i \prod _{j \in I^{\alpha}_2} \alpha_j \prod_{k \in I^{\beta}_3} \beta_k \ldots 
\prod _{l \in I^{\alpha}_r} \alpha_l, 
\]
where the set of indices $I^{\beta}$ are permutations with repetitions of $\{ 1,\ldots,d-1 \}$ and the set of indices $I^{\alpha}$ are permutations with repetitions of $\{1,\ldots,d(m/d-1) \}$. The first product belongs to $N_h$, but the second product $\prod _{j \in I^{\alpha}_2} \alpha_j$ is in $G_{\tilde f}$, so we multiply the third product $\prod_{k \in I^{\beta}_3} \beta_k$ on the right by $ (\prod _{j \in I^{\alpha}_2} 
\alpha_j)^{-1} \prod _{j \in I^{\alpha}_2} \alpha_j = Id$. Now we rewrite $\sigma$ as follows
\[
\sigma = \prod_{i \in I^{\beta}_1} \beta_i \left( \prod _{j \in I^{\alpha}_2} \alpha_j \prod_{k \in I^{\beta}_3} \beta_k 
(\prod _{j \in I^{\alpha}_2} \alpha_j)^{-1} \right) \prod _{j \in I^{\alpha}_2} \alpha_j \ldots \prod _{l \in I^{\alpha}_r} \alpha_l,
\]
where the first and the second products $ \prod_{i \in I^{\beta}_1} \beta_i, \left( \prod _{j \in I^{\alpha}_2} \alpha_j \prod_{k \in I^{\beta}_3} \beta_k (\prod _{j \in I^{\alpha}_2} \alpha_j)^{-1} \right)$ are in $N_h$.

We follow this procedure with the new third product until there is no product of $\beta$'s left, and in a finite number of times we have $\sigma$ rewritten as a product of several permutations in $N_h$ and a final one in $G_{\tilde f}$, which can be renamed as $\tau \in N_h$ and $\alpha \in G_{\tilde f}$. 
\end{proof}

\begin{lema}\label{lema:balanced}
Assume that $f= \tilde f \circ h$ satisfies Hypothesis~\ref{hyp}. Let $H_{i_0,j_0}$ denote the stabilizer of $(i_0,j_0)$ in $N_h$. Then: 
\begin{enumerate}
\item $G_{\tilde f}$ is isomorphic to the monodromy group of $\tilde f$.
\item $N_h$ is a normal subgroup of $G_f$ such that for every block $B_i$ of $f$ there exists a subgroup $G_i$ of $N_h$ such that $G_i$ leaves fixed all the elements $(j,k)\not \in B_i$. Moreover, $G_i$ is isomorphic to the monodromy group $G_h$ of $h$.
\item The subgroup $\bigcap_{j=1,\ldots,d} H_{i_0,j}\subset N_h$ acts transitively on the elements of $B_i$ for every $i\neq i_0$.
\item If $h$ is $2$-transitive, then $H_{i_0,j_0}$ acts transitively on the other elements of the block $B_{i_0}$, thus, on $\{(i_0,j)\colon j\neq j_0\}$.
\end{enumerate}
\end{lema}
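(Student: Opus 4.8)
The plan is to deduce all four statements from the semidirect product decomposition $G_f=N_h\rtimes_{\phi}G_{\tilde f}$ of Lemma~\ref{lema:semidirect} together with the block-action relations $\alpha_k(z_{i,j})=z_{\alpha_k(i),j}$ and $\beta_k(z_{i,j})=z_{i,\beta_k(i,j)}$ of~\eqref{ab}. The heart of the matter is the part of (2) asserting the existence of $G_i$: given a block $B_i$, I will construct a subgroup $G_i\le N_h$ fixing $B_{i'}$ pointwise for every $i'\ne i$ and acting on the $d$-element set $B_i$ as a permutation group isomorphic to the monodromy group $G_h$ of $h$. Granting this, statements (3) and (4) are immediate: $\bigcap_{j}H_{i_0,j}$ is exactly the subgroup of $N_h$ fixing $B_{i_0}$ pointwise, so it contains $G_i$ for every $i\ne i_0$, and $G_i\cong G_h$ acts transitively on $B_i$ because the monodromy group of a polynomial is transitive (it contains the $d$-cycle around infinity) — giving (3); and if $h$ is $2$-transitive then $G_{i_0}\cong G_h$ is $2$-transitive on $B_{i_0}$, so the stabilizer of $(i_0,j_0)$ inside $G_{i_0}$, a subgroup of $H_{i_0,j_0}$, is transitive on $\{(i_0,j):j\ne j_0\}$ — giving (4).

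Statement (1) and the normality in (2) come first, essentially for free from~\eqref{ab}. Since every $\alpha_k$ preserves the second index $j$, so does every element of $G_{\tilde f}=\langle\alpha_k\rangle$; hence such an element is determined by the permutation it induces on the block indices $\{1,\dots,m/d\}$, i.e.\ $G_{\tilde f}$ is \emph{faithfully} represented on the set of blocks. On those indices $\alpha_k$ acts as the monodromy of $\tilde f$ around $f(a_k)$, and $\{f(a_k)\}$ is precisely the set of critical values of $\tilde f$ (the $h$-preimages of the critical points of $\tilde f$ are exactly the $a_k$); so the block action identifies $G_{\tilde f}$ with the monodromy group of $\tilde f$, which is (1). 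The same relations show that $N_h$, generated by conjugates $\alpha_i\beta_k\alpha_i^{-1}$ of the block-preserving $\beta_k$, acts trivially on the blocks; since $G_{\tilde f}$ acts faithfully there, $N_h$ is exactly the kernel of the block action, so in particular each $\beta_k\in N_h$. Finally $N_h\trianglelefteq G_f$ is part of the semidirect product structure.

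For the copy of $G_h$: fix $B_i$ and a regular value $t_0$ of $f$, so that $w_i(t_0)$ is a regular value of both $\tilde f$ and $h$ and $B_i=h^{-1}(w_i(t_0))$. In the $w$-plane choose standard inertia generators $\mu_1,\dots,\mu_{d-1}$ based at $w_i(t_0)$, one around each of the critical values $h(b_1),\dots,h(b_{d-1})$ of $h$; by the usual presentation of the fundamental group of a punctured plane these generate $\pi_1(\mathbb{C}\setminus\Sigma_h,w_i(t_0))$, so their $h$-monodromies generate $G_h$. After a homotopy inside $\mathbb{C}\setminus\Sigma_h$, which does not change their classes, we may assume each $\mu_k$ avoids $\tilde f^{-1}(\Sigma_f)$, so that $\ell_k:=\tilde f\circ\mu_k$ is a loop in $\mathbb{C}\setminus\Sigma_f$ based at $t_0$; let $\rho_k\in G_f$ be its monodromy and set $G_i:=\langle\rho_1,\dots,\rho_{d-1}\rangle$. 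Three checks complete the argument. First, $\rho_k\in N_h$: by Hypothesis~\ref{hyp}(1) the value $f(b_k)$ is not a critical value of $\tilde f$ (otherwise $f(b_k)=f(a_l)$ for some $l$), so $\ell_k$ is a lasso around the single critical value $f(b_k)$ of $f$, whence $\rho_k$ is $G_f$-conjugate to $\beta_k\in N_h$ and hence in $N_h$. Second, $\rho_k$ fixes $z_{i',j}$ for $i'\ne i$: since $f(b_k)\notin\Sigma_{\tilde f}$, the map $\tilde f$ is unramified over $f(b_k)$, so $\ell_k$ has trivial $\tilde f$-monodromy and lifts to $m/d$ loops in the $w$-plane — the one based at $w_i(t_0)$ being $\mu_k$, the others lassos around the remaining $\tilde f$-preimages $p_{i'}\ne h(b_k)$ of $f(b_k)$; and by Hypothesis~\ref{hyp}(2) none of these $p_{i'}$ is a critical value of $h$ (if $p_{i'}=h(b_l)$ then $f(b_l)=f(b_k)$, forcing $h(b_l)=h(b_k)=p_{i'}$, a contradiction), so each such lift has trivial $h$-monodromy. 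Third, $\rho_k$ acts on $B_i=h^{-1}(w_i(t_0))$ exactly as the $h$-monodromy of $\mu_k$. Hence $G_i\le N_h$ fixes $B_{i'}$ pointwise for $i'\ne i$, so is faithful on $B_i$, and its image on $B_i$ equals $G_h$; thus $G_i\cong G_h$, proving (2).

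The step I expect to be the main obstacle is the second check above: that the monodromy around a critical value of $f$ coming from $h$ is concentrated in a single block. This is precisely where both halves of the non-merging Hypothesis~\ref{hyp} are used — condition (1) to make $\tilde f$ unramified over $f(b_k)$, so that $\ell_k$ lifts to loops and leaves the block indices undisturbed, and condition (2) to force the neighbouring $\tilde f$-sheets to meet only regular values of $h$. By contrast, once the $\mu_k$ are based at the common point $w_i(t_0)$, all identifications are made with respect to the single labelling of $B_i$ by the $j$-index at $t_0$, so the isomorphism $G_i\cong G_h$ in the third check requires no extra bookkeeping.
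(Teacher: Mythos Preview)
Your proof is correct and follows the same overall architecture as the paper: establish (2), then read off (3) and (4) from it, with (1) coming directly from the block-action relations~\eqref{ab}. The paper's own proof of (2) is terser and slightly different in the construction of $G_i$: it observes that each generator $\beta_k$ already moves only one block (``by Hypothesis~\ref{hyp}'', without spelling out which part is used where), and then uses that $G_{\tilde f}$ acts transitively on the block indices to \emph{conjugate} this copy of $G_h$ onto any desired block $B_i$, setting $G_i:=\alpha\langle\beta_k\rangle\alpha^{-1}$ for a suitable $\alpha\in G_{\tilde f}$. You instead build $G_i$ intrinsically by basing inertia loops $\mu_k$ at $w_i(t_0)$ and pushing them down via $\tilde f$. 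Your route has the advantage of making transparent exactly where each half of the non-merging hypothesis is needed (condition~(1) for the lift of $\ell_k$ to close up and for $\rho_k$ to be conjugate to $\beta_k$, condition~(2) for the other sheets to miss the ramification of $h$); the paper's route is shorter once one accepts that the $\beta_k$ are block-concentrated, and it makes the $G_i$ visibly conjugate to one another inside $G_f$. Either way the content is the same.
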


\begin{proof}
$(1)$ and $(4)$  follow easily and (3) is a consequence of $(2)$, since $\bigcap_{j=1,\ldots,d} H_{i_0,j}$ contains $G_i$ for every $i \neq i_0$.

To conclude, we prove (2). The group $N_h$ is generated by elements of the form $\tau=\alpha\beta\alpha^{-1}$, with $\alpha\in G_{\tilde f}$ and $\beta\in G_h$. By Hypothesis~\ref{hyp}, $\beta$ moves elements of at most one block $B_i$. Since the group $G_{\tilde f}$ acts transitively on each column, then for every $B_i$ there exists a subgroup of $N_h$, which we will call $G_i$, isomorphic to $G_h$ that moves only the elements of $B_i$.
\end{proof}

Now, we shall prove that the non-merging hypothesis on critical values implies that for any two decompositions of a polynomial, one of the inner factors 
factorizes by the other one. We recall that $\tau_\infty=(1,2,\ldots,m)\in G_f$.

\begin{prop}\label{prop:uniqueness}
Assume that $f= f_0 \circ h_0$ satisfies Hypothesis~\ref{hyp}, $f,f_0,h_0\in\mathbb{C}[z]$. 
If there exist $f_1,h_1\in\mathbb{C}[z]$ such that $f=f_1\circ h_1$, then 
there exists a polynomial $w$ such that either $h_1=w\circ h_0$ 
or $h_0=w\circ h_1$.
\end{prop}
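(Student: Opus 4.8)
The plan is to work with monodromy groups and imprimitivity systems, translating the statement about polynomial decompositions into a statement about block systems for the transitive action of $G_f$ on the roots $z_1(t),\dots,z_m(t)$. Recall that decompositions $f = f_i \circ h_i$ correspond (via \cite[Prop.~3.6]{CM}) to imprimitivity systems $\mathcal{B}_i = \{B^{(i)}_1,\dots\}$ of $G_f$, where the block containing the root $z_k$ is the set of roots of $h_i(z) - h_i(z_k)$; and a relation $h_1 = w \circ h_0$ (resp. $h_0 = w \circ h_1$) corresponds precisely to $\mathcal{B}_0$ refining $\mathcal{B}_1$ (resp. $\mathcal{B}_1$ refining $\mathcal{B}_0$), i.e. each block of one system is a union of blocks of the other. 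So the proposition reduces to: \emph{under Hypothesis \ref{hyp} for $f = f_0 \circ h_0$, the block system $\mathcal{B}_0$ is comparable (by refinement) to every other block system $\mathcal{B}_1$ of $G_f$.}

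First I would use Lemma \ref{lema:semidirect}: Hypothesis \ref{hyp} gives $G_f = N_{h_0} \rtimes_\phi G_{f_0}$, where $N_{h_0}$ is the normal subgroup generated by the $\alpha_i \beta_k \alpha_i^{-1}$, and crucially (by \eqref{ab} and Lemma \ref{lema:balanced}(2)) $N_{h_0}$ is exactly the subgroup of $G_f$ that preserves each block $B^{(0)}_i$ of $\mathcal{B}_0$ pointwise-as-a-set — more precisely $N_{h_0}$ acts within blocks, with $N_{h_0} = \prod_i G_i$ "column-wise conjugated", while $G_{f_0}$ permutes the blocks faithfully as the monodromy group of $f_0$. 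The key structural fact I want to extract is that $\mathcal{B}_0$ is the block system whose kernel-of-the-block-permutation-action is exactly $N_{h_0}$, and that $N_{h_0}$ restricted to a single block $B^{(0)}_i$ is (isomorphic to) the full monodromy group $G_{h_0}$, acting transitively there.

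Now take the other block system $\mathcal{B}_1$, with blocks of size $e = \deg h_1$, and let $B'$ be the block of $\mathcal{B}_1$ containing $z_1$. I would consider $B' \cap B^{(0)}_1$ and argue it is a block for the action of $N_{h_0}|_{B^{(0)}_1} \cong G_{h_0}$ on $B^{(0)}_1$; also $B^{(0)}_1 \cap B'$ is a block for the stabilizer considerations coming from $\mathcal{B}_1$. The heart of the argument is to show these two block systems on the root set cannot "cross": either every $B^{(0)}$-block is contained in a single $B^{(1)}$-block, or vice versa. To do this I would exploit that $G_{f_0}$ permutes the $\mathcal{B}_0$-blocks transitively and that $N_{h_0}$ is normal, so the image of $\mathcal{B}_1$ under any element of $N_{h_0}$ is again $\mathcal{B}_1$; combined with the product structure $N_{h_0} = \prod_i G_i$ this forces the trace of $\mathcal{B}_1$ on each $B^{(0)}_i$ to be "uniform" across $i$, and a counting/transitivity argument (using that $G_{h_0}$ acting on $B^{(0)}_1$ has no block of size strictly between $1$ and $d$ incomparable with $B' \cap B^{(0)}_1$ — here one invokes the Burnside–Schur trichotomy for the primitive factors of $h_0$, but in fact one only needs that block systems of $h_0$ are totally ordered by refinement, which again is Hypothesis \ref{hyp} applied recursively to $h_0$, or just a direct argument) then pins down comparability of $\mathcal{B}_0$ and $\mathcal{B}_1$.

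The main obstacle, I expect, is precisely controlling how $\mathcal{B}_1$ interacts with the $\mathcal{B}_0$-blocks: a priori $\mathcal{B}_1$ need not be $N_{h_0}$-invariant in a way that respects the product decomposition, and one must rule out a "diagonal" block system that is transverse to $\mathcal{B}_0$. I would handle this by examining the subgroup $\bigcap_{j} H_{i_0,j}$ of Lemma \ref{lema:balanced}(3), which acts transitively on every other block; if $\mathcal{B}_1$ were transverse to $\mathcal{B}_0$, this transitivity would force a $\mathcal{B}_1$-block to meet some $B^{(0)}_i$ ($i \ne i_0$) in a proper nonempty subset while being fixed setwise, contradicting that it is a block — giving the desired dichotomy $h_1 = w \circ h_0$ or $h_0 = w \circ h_1$.
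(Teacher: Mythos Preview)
Your final paragraph lands on exactly the paper's argument: once you know (Lemma~\ref{lema:balanced}(3)) that the pointwise stabilizer of one $\mathcal{B}_0$-block---hence in particular the point-stabilizer $H_1$ of $z_1$---acts transitively on every other $\mathcal{B}_0$-block, the dichotomy is immediate. The paper's proof is just this, written as a clean two-case split: let $B_0,B_1$ be the blocks through $z_1$; if $B_1\subset B_0$ then Galois correspondence gives $h_0=w\circ h_1$; otherwise pick $i\in B_1\setminus B_0$, note that $\tau_\infty^{i-1}(B_0)$ is a $\mathcal{B}_0$-block $\neq B_0$ containing $i$, and that $H_1$ fixes $B_1$ setwise and is transitive on $\tau_\infty^{i-1}(B_0)$, so the whole block $\tau_\infty^{i-1}(B_0)$ lies in $B_1$ and hence $\mathcal{B}_0$ refines $\mathcal{B}_1$.

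Everything in your middle two paragraphs---the semidirect product structure, the intersection $B'\cap B^{(0)}_1$ as a block for $G_{h_0}$, the appeal to Burnside--Schur, and especially the claim that ``block systems of $h_0$ are totally ordered by refinement'' via a recursive use of Hypothesis~\ref{hyp}---is an unnecessary detour, and the recursive step is not actually licensed by the hypotheses (the proposition only assumes the non-merging condition for the single decomposition $f=f_0\circ h_0$, not for any internal structure of $h_0$). None of that machinery is needed: Lemma~\ref{lema:balanced}(3) alone does the work, as you yourself observe at the end.
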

\begin{proof}
Let $B_0$ be a block associated to the decomposition $f=f_0\circ h_0$ such that $1\in B_0$, and 
let $B_1$ a block associated to  $f=f_1\circ h_1$ such that $1 \in B_1$. Then either $B_1\subset B_0$ or there exists 
$i\in B_1\backslash B_0$.

If $B_1\subset B_0$, then there exists $w\in\C[z]$ such that $w(h_1)=h_0$: we know that $B_0= \{ j \in \{ 1,\ldots,m \}: 
h_0(z_1)=h_0(z_j)\}$ and $B_1= \{ k \in \{ 1,\ldots,m \}: h_1(z_1)=h_1(z_k)\}$. Their stabilizers $G_{B_0}= \{ \sigma \in 
G_f: \sigma(B_0)=B_0\}$ and $G_{B_1}= \{ \sigma \in G_f: \sigma(B_1)=B_1\}$ verify that $G_{B_1} \subset G_{B_0}$ and, 
therefore, $L^{G_{B_0}}= \C(h_0(z_1)) \subset L^{G_{B_1}} = \C(h_1(z_1))$, where $L= \C(z_1(t),\ldots,z_m(t))$
and $L^{G_k}$ denotes the elements of $L$ invariants by the action of $G_k$.  As a 
consequence, there exists a rational function $w$, which we can assume polynomial, such that $h_0 = w \circ h_1$.

If there exists $i\in B_1\backslash B_0$, then $\tau_\infty^{i-1}(B_0)\subset B_1$, and there exists $w\in\C[z]$ such that 
$w(h_0)=h_1$: since $\tau_{\infty}^{i-1}(1)=i \in \tau_{\infty}^{i-1}(B_0)$, this one is a block of the imprimitivity system 
associated to the decomposition $f=f_0 \circ h_0$ different from $B_0$, then $1 \not\in \tau_{\infty}^{i-1}(B_0)$. By Lemma 
\ref{lema:balanced} (3), we can assure that the stabilizer $H_1$ (in $G_f$) of $1$ is transitive on every other 
block different from $B_0$. Therefore, $H_1$ is transitive on $\tau_{\infty}^{i-1}(B_0)$ and the orbit of $i$ by $H_1$ 
contains $\tau_{\infty}^{i-1}(B_0)$. Since every permutation in $H_1$ leaves $B_1$ fixed and $i \in B_1$, $\tau_{\infty}^{i-1}(B_0) 
\subset B_1$ and following a similar argument as in the previous case, there exist $w \in \C[z]$ such that $h_1= w \circ h_0$. 
\end{proof}

Proposition~\ref{prop:uniqueness} implies that when we have chains of decompositions satisfying Hypothesis~\ref{hyp}, 
then we have uniqueness.

\begin{proof}[Proof of Proposition \ref{unicity}]
We shall prove it by induction on the number $\tilde d$ of factors in the decomposition of $f$.

First, assume that $\tilde d=0$ and $f=\tilde f_0$ is $2$-transitive, Chebyshev or a monomial, and let $f=f_0\circ\ldots\circ  f_d$ be a 
decomposition of $f$ for some polynomials $f_0,\ldots,f_d$ such that every $f_k$ is either $2$-transitive, Chebyshev or a monomial,
and that Hypothesis~\ref{hyp} holds for every $ f_0 \circ \ldots \circ f_k = \Big( f_0 \circ \ldots \circ f_{k-1} \Big) \circ 
f_k$. Since 2-transitive polynomials are primitive and monomials and Chebyshev polynomials 
can not decompose satisfying Hypothesis~\ref{hyp}, then $d=0$.

Now, assume that $f=f_0\circ\ldots\circ f_d$ for some polynomials $f_0,\ldots,f_d$ such that every $f_k$ is either $2$-transitive, 
Chebyshev or a monomial, and that Hypothesis~\ref{hyp} holds for every $f_0 \circ \ldots \circ f_k = \Big( f_0 \circ \ldots 
\circ f_{k-1} \Big) \circ f_k$.

If  $f=\tilde f_0\circ\ldots\circ \tilde f_{\tilde d}$ is another decomposition of $f$ for some polynomials $\tilde f_0,
\ldots,\tilde f_d$ such that every $\tilde f_k$ is either $2$-transitive, Chebyshev or a monomial, by 
Proposition~\ref{prop:uniqueness} there exists $w\in\C[z]$ such that either $f_d=w\circ \tilde f_{\tilde d}$ or 
$\tilde f_{\tilde d}=w\circ f_{d}$.

Assume that $\tilde f_{\tilde d}=w\circ f_{d}$ for some non-linear $w$. In particular
this implies that $w,f_d$ are both monomials or both Chebyshev polynomials.
Then 
\[
f_0\circ\ldots\circ f_{d-1}=\tilde f_0\circ\ldots\circ \tilde f_{\tilde d-1}\circ w.
\]
Thus, Hypothesis~\ref{hyp} does not hold, since $\tilde f_0\circ\ldots\circ \tilde f_{\tilde d-1}\circ w$
and $f_d$ share a critical value. 

Therefore, we may assume that $f_d=w\circ \tilde f_{\tilde d}$. But in this case
\[
f_0\circ\ldots\circ f_{d-1}\circ w=\tilde f_0\circ\ldots\circ \tilde f_{\tilde d-1}.
\]
Note that $f_0\circ\ldots\circ f_{d-1}\circ w$ still satisfies Hypothesis~\ref{hyp} since the critical values of $w$ are a 
subset of those of $f_d$. We conclude by induction.

If Hypothesis~\ref{hyp} holds for every $\tilde f_0 \circ \ldots \circ \tilde f_k = \Big( \tilde f_0 \circ \ldots \circ 
\tilde f_{k-1}\Big)\circ \tilde f_k$, interchanging $\{f_k\}$ and $\{\tilde f_k\}$ in the previous arguments we conclude.
\end{proof}

\section{Proof of Theorems~\ref{theo:main} and \ref{theo:main2}}\label{section:demo}

Let $C(t)$ be a balanced cycle of an imprimitive polynomial $f=\tilde f \circ h$, with $h$ $2$-transitive, Chebyshev or a monomial. According to the type of $h$, we deduce first some information on how the cycle $C(t)$ is positioned with 
respect to the imprimitivity system defined by $h$.

\begin{prop}\label{prop:balanced}
Assume that $f=\tilde f \circ h$ satisfies Hypothesis \ref{hyp}. Let $C(t)$ be a balanced cycle of $f$ and let $\mathcal{B}_h = \{B_1,\ldots,B_{m/d}\}$ denote the
 imprimitivity system corresponding to $h$.
\begin{enumerate}
\item If $h$ is $2$-transitive, then $n_k$ is constant for $k\in B_j$, for every $B_j\in\mathcal{B}_h$.

\item If $h(z)=z^d$ or $h(z)=T_d(z)$, then the restriction of $C(t)$ to each block of $\mathcal{B}_h$ is balanced.
\end{enumerate}
\end{prop}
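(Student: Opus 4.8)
The plan is to exploit the semidirect product structure $G_f = N_h \rtimes_\phi G_{\tilde f}$ established in Lemma 5.1, together with the transitivity properties of the stabilizers recorded in Lemma 5.2, in exact analogy with the proof of Proposition 4.8(1) (the $2$-transitive base case), but now averaging only over a suitable subgroup so that the argument sees the block structure of $h$. Throughout I will use the reformulation of "balanced" given just before Lemma 4.7: $C(t)$ is balanced iff $\sum_{i} n_{\sigma(i)}\epsilon_m^i = 0$ for every $\sigma \in G_f$, where the roots are indexed $z_{i,j}$, $i=1,\dots,m/d$, $j=1,\dots,d$ in block order, and $\tau_\infty=(1,2,\dots,m)$ runs through all indices.

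\textbf{Part (1), $h$ $2$-transitive.} Fix an index $(i_0,j_0)$ and let $H_{i_0,j_0}\subset N_h$ be its stabilizer in $N_h$. Consider the subgroup $K = H_{i_0,j_0}$ acting on the roots and sum the balancing identity over all $\sigma\in K$: since each $\sigma\in K$ fixes $(i_0,j_0)$, the coefficient at position $(i_0,j_0)$ contributes $|K|\,n_{i_0,j_0}$ times the corresponding root of unity, while by Lemma 5.2(4) $K$ acts transitively on $\{(i_0,j): j\ne j_0\}$, so the remaining entries of block $B_{i_0}$ get averaged to a common multiple of $\frac{1}{d-1}\sum_{j\ne j_0} n_{i_0,j}$, and by Lemma 5.2(3) the subgroup also acts transitively within each other block $B_i$, $i\ne i_0$, so each such block contributes a common multiple of its block-average. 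Reproducing the computation in the proof of Proposition 4.8(1) — isolating the distinguished entry, using $\sum_{\text{over a transitive orbit}}\epsilon_m^{(\cdot)}$ to collapse the sums, and using that the relevant partial sums of $m$-th roots of unity are nonzero — yields $d\,n_{i_0,j_0} = \sum_{j=1}^d n_{i_0,j}$, i.e. $n_{i_0,j_0}$ equals the average of the coefficients over its block. Since $(i_0,j_0)$ was arbitrary, $n_k$ is constant on each block $B_j$.

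\textbf{Part (2), $h$ a monomial or Chebyshev.} Here I want to show that for each block $B_{i_0}$ the restricted chain $\tilde C_{i_0}(w)=\sum_{j} n_{i_0,j} z_{i_0,j}$ is a balanced chain of $h$, i.e. $\sum_{j} n_{i_0,\rho(j)}\epsilon_d^j = 0$ for every $\rho\in G_h$. By Lemma 5.2(2) there is a subgroup $G_{i_0}\subset N_h$, isomorphic to $G_h$, that fixes every index outside $B_{i_0}$ and realizes every $\rho\in G_h$ on $B_{i_0}$. Apply the global balancing identity for $f$ to an element $\sigma\in G_{i_0}$ that acts as $\rho$ on $B_{i_0}$ and trivially elsewhere: the entries outside $B_{i_0}$ are unpermuted, and because the indices of $B_{i_0}$ form a single contiguous block $\{(i_0-1)d+1,\dots,i_0 d\}$ in the global ordering, the $m$-th roots of unity attached to them are $\epsilon_m^{(i_0-1)d}\cdot(\epsilon_m^{d})^{1},\dots,\epsilon_m^{(i_0-1)d}\cdot(\epsilon_m^{d})^{d}$, and $\epsilon_m^d$ is a primitive $d$-th root of unity. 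Subtracting the balancing identity for $\sigma$ from that for the identity (both equal $0$), all terms outside $B_{i_0}$ cancel and we are left with $\epsilon_m^{(i_0-1)d}\sum_{j}\big(n_{i_0,j}-n_{i_0,\rho^{-1}(j)}\big)\epsilon_d^{\,j}=0$, hence $\sum_j n_{i_0,j}\epsilon_d^j = \sum_j n_{i_0,\rho(j)}\epsilon_d^j$ for all $\rho\in G_h$; combined with the averaged version (sum over all $\rho\in G_h$, which kills the sum because $C(t)$ viewed globally is balanced and $\sum_{\rho}\sum_j n_{i_0,\rho(j)}\epsilon_d^j$ is proportional to $(\sum_j n_{i_0,j})\sum_j\epsilon_d^j=0$ in the monomial case, with the Chebyshev case handled as in \eqref{intct} by pairing $\rho$ with its complex-conjugate partner) this forces $\sum_j n_{i_0,j}\epsilon_d^j=0$, i.e. $\tilde C_{i_0}$ is balanced.

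\textbf{Main obstacle.} The delicate point is bookkeeping the indexing: one must make sure the global numbering of the roots $z_{i,j}$ is chosen so that each block is an interval and so that $\tau_\infty$ restricted appropriately induces the correct cycle-at-infinity on each block and on the quotient — this is exactly the numbering fixed at the start of Section 5 via the ordering of the paths around the critical values, and the non-merging Hypothesis \ref{hyp} is what makes it consistent (it is what gives $\tau_\infty = \tau_{f_0,\infty}\circ\tau_{h,\infty}$ and the lifting in \eqref{ab}). The second subtlety, in the monomial/Chebyshev case, is that "balanced" for $h$ is only equivalent to the single equation $P_{\tilde C_{i_0}}(\epsilon_d)=0$ because $h$ is a power or Chebyshev (the characteristic-ideal caveat after Definition \ref{defi:balanced}); so I must phrase the conclusion in terms of that one equation and verify it directly from the computation above rather than invoking the general block-wise statement. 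Once the indexing is pinned down, both parts are a short averaging computation modeled on Proposition 4.8.
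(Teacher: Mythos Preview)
Your Part~(1) is essentially the paper's argument: average the balancing identity over the stabilizer $H_{i_0,j_0}\subset N_h$, use Lemma~\ref{lema:balanced}(4) inside $B_{i_0}$ and Lemma~\ref{lema:balanced}(3) on the other blocks, and observe that the block sums of $m$-th roots of unity vanish so that the ``other block'' contribution drops out. Nothing to change there.

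Part~(2) has a concrete indexing error that makes the computation, as written, wrong. The blocks $B_i$ are \emph{not} contiguous intervals $\{(i-1)d+1,\dots,id\}$: since $\tau_\infty=(1,2,\dots,m)$ must permute the blocks cyclically (it projects to the cycle at infinity of $\tilde f$), the blocks are residue classes mod $m/d$, i.e.\ $k(i,j)=i+(j-1)\,m/d$ (this is exactly what the paper invokes via \cite[Lemma~3.1]{PM}). Consequently it is $\epsilon_m^{m/d}$, not $\epsilon_m^{d}$, that is a primitive $d$-th root of unity; with contiguous intervals, $\tau_\infty$ would not even respect the block partition. Your ``main obstacle'' paragraph flags the indexing as delicate but then resolves it the wrong way.

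Once the indexing is fixed your route can be made to work, but it is more roundabout than necessary. The paper's proof of~(2) is a one-line averaging: take $H_0=\bigcap_{j=1}^{d} H_{i_0,j}\subset N_h$, the \emph{pointwise} stabilizer of $B_{i_0}$, and sum the balancing identity over $H_0$. By Lemma~\ref{lema:balanced}(3) $H_0$ acts transitively on each $B_i$ with $i\neq i_0$, so those blocks contribute a multiple of $\sum_{j}\epsilon_m^{k(i,j)}=0$; what survives is exactly $|H_0|\sum_{j} n_{i_0,j}\,\epsilon_m^{k(i_0,j)}=0$, which after factoring out $\epsilon_m^{i_0}$ is $P_{\tilde C_{i_0}}(\epsilon_d)=0$. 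In the monomial/Chebyshev case this single equation is equivalent to ``balanced'' (the caveat after Definition~\ref{defi:balanced} that you yourself noted), so no subtraction step, no pairing of $\rho$ with a conjugate partner, and no separate Chebyshev argument via~\eqref{intct} is needed.
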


\begin{proof}
$(1)$ 
Let us fix $i_0,j_0$ and let us denote $H_0=H_{i_0,j_0}\subset G_f$ the stabilizer of $z_{i_0,j_0}$. For every preimage $z_{i,j}(t)$ of $t$ by $f$, let us denote by $k(i,j)$ its position when we enumerate them so that $\tau_{\infty}=(z_1(t), z_2(t), \ldots, z_m(t))$. Besides, $z_{i,j}(t)= z_{k(i,j)}(t)$.

Therefore, since $C(t)$ is balanced,
\[
\begin{split}
0=&\sum_{\tau\in H_0} \sum_{i,j} n_{\tau(i,j)} \epsilon_m^{k(i,j)}\\
=&|H_0| n_{i_0,j_0} \epsilon_m^{k(i_0,j_0)}
+ |H_0| \left( \frac{\sum_{j=1,j\neq j_0}^{d} n_{i_0,j}}{d-1}\sum_{j=1,j\neq j_0}^{d} \epsilon_m^{k(i_0,j)} \right) +\\
& |H_0| \left( \sum_{i=1,i\neq i_0}^{m/d} \frac{\sum_{j=1}^{d} n_{i,j}}{d}\sum_{j=1}^{d} \epsilon_m^{k(i,j)} \right),
\end{split}
\]
where the second summand is due to Lemma~\ref{lema:balanced} (4), since $h$ is $2$-transitive, and the last summand is due to Lemma~\ref{lema:balanced} (3). Now, observe that $\sum_{j=1}^{d} \epsilon_m^{k(i,j)}=0$ for every $i$, since it is the sum of all the powers of $\epsilon_m$ corresponding to a block $B_i$, whose elements are the residue class mod $m/d$ for some $l \in \{ 1,\ldots,d  \}$ (see, for instance, \cite[Lemma 3.1]{PM}), that is, $\sum_{j=1}^d \epsilon_m^{k(i,j)} = \sum_{j=1}^{d} \epsilon_m^{l +jm/d} = \epsilon_m^l \sum_{j=1}^d \epsilon_m^{jm/d} = \epsilon_m^l \sum_{j=1}^d \epsilon_d^j =0$. Therefore,
\[
\begin{split}
0 & = |H_0| \left( n_{i_0,j_0} \epsilon_m^{k(i_0,j_0)} + \frac{\sum_{j=1,j\neq j_0}^{d} n_{i_0,j}}{d-1} (- \epsilon_m^{k(i_0,j_0)})  \right) \\ & = |H_0| \epsilon_m^{k(i_0,j_0)} \left( n_{i_0,j_0}(1+1/(d-1)) -\frac{\sum_{j=1}^{d} n_{i_0,j}}{d-1}\right). 
\end{split}
\]
Thus, $n_{i_0,j_0}$ does not deppend on $j_0$. Therefore $n_{i_0,j}$ is constant in the block $B_{i_0}$. 

$(2)$ Assume now that $h(z)=z^d$, or $h(z)=T_d(z)$. Let $H_0=\bigcap_{j=1,\ldots,d} H_{i_0,j}$. The elements $\{(i,j)\}_{j=1,\ldots,\deg(\tilde f)}$ with $i\neq i_0$ fixed are moved transitively by Lemma~\ref{lema:balanced} (3). Therefore, using similar arguments as in (1),
\[
0=\sum_{\tau\in H_0} \sum_{i,j} n_{\tau(i,j)} \epsilon_m^{k(i,j)} = |H_0| \sum_{j=1}^{d} n_{i_0,j} \epsilon_m^{k(i_0,j)}. 
\]
Then, $C(t)$ restricted to the block $B_{i_0}$ (the $i_0$-th $h$-invariant part of $C(t)$) is balanced. 
\end{proof}

\begin{proof}[Proof of Theorem~\ref{theo:main}]
By Proposition~\ref{prop:balanced}, $C(t)$ is constant along the blocks of the imprimitivity system corresponding to $h$, 
$\mathcal{B}_h = \{ B_1, \ldots, B_{m/d}\}$. Therefore,
\[
\int_{C(t)}g =\sum_i n_i g(z_i(t))=\sum_{i=1}^{m/d} n_i \sum_{k \in B_i} g(z_k(t)).
\]
Let us observe that $\sum_{k \in B_i} g(z_k(t))$ is invariant by $G_{B_i} = \{ \sigma \in G_f : \sigma(B_i)=B_i \}$. Then, 
if we denote $L = \C(z_1(t),\ldots,z_m(t))$, $\sum_{k \in B_i} g(z_k(t)) \in L^{G_{B_i}} = \C(h(z_k(t))$ for every $k \in 
B_i$. Let us denote $w_i(t) = h(z_k(t))$, which is constant in $k \in B_i$; then $w_i(t)$ is one of the $m/d$ preimages of $t$ by $\tilde f$. Therefore, $\sum_{k \in B_i} g(z_k(t)) = \tilde g_i(w_i(t))$ for some rational function $\tilde g_i(z)$, which we can assume is a polynomial. Since every subgroup $G_{B_i}$ of $G_f$ is conjugated to another $G_{B_j}$, it can easily be proved that $\tilde g_i$ does not depend on the block $B_i$, that is, there is only one polynomial $\tilde g(z)$ such that 
\begin{equation}\label{gtilde} 
\sum_{k \in B_i} g(z_k(t)) = \tilde g(w_i(t)), \quad i=1, \ldots,m/d.
\end{equation}
Then, 
since $h(C(t)) = \sum_{i=1}^{m/d} dn_i w_i(t)$,
\[
\int_{C(t)} g = \sum_{i=1}^{m/d} n_i \tilde g(w_i(t)) = %\int_{C_{f_0}(t)} \tilde g,
\frac{\int_{h(C(t))}  \tilde g}{d}.
\]

Since $z_k(t)$ for $k \in B_i$ are the preimages of $w_i$ by $h$, Equation~(\ref{gtilde}) can be rewritten as
\[
\sum_{k \in B} g(z_k(w)) = \tilde g(w).
\]
Replacing $g$ by its linear expansion in terms of the basis $\{1,z,\ldots, z^{d-1}, h(z), zh(z),$ 
$ \ldots, z^{d-1}h(z), h^2(z), \ldots\}$  of $\C[z]$, namely $g(z)= \sum_{i=0}^{d-1} z^i g_i(h(z))$, we get
\[
\begin{split}
\tilde g(w) & = \sum_{k \in B} \sum_{i=0}^{d-1} z_k^i(w) g_i(h(z_k(w))) = \sum_{k \in B} \sum_{i=0}^{d-1} z_k^i(w) g_i(w) \\ 
& = \sum_{i=0}^{d-1} \left( \sum_{k \in B} z_k^i(w) \right) g_i(w) = \sum_{i=0}^{d-1} s_i g_i(w),
\end{split}
\]
where $s_i$ can be obtained by the Newton-Girard formulae applied to $h$.
\end{proof}

\begin{proof}[Proof of Theorem \ref{theo:main2} ]
Assume that $f=\tilde f \circ h$, $h(z)=z^d$ or $h(z)=T_d(z)$
and Hypothesis \ref{hyp} is verified. Let $C(t)$ be a balanced cycle of $f$. Assume that
\[
\int_{C(t)} g\equiv 0,
\]
that is, $\sum_{i,j} n_{i,j} g(z_{i,j}(t))=0$. Fix $i_0\in\{1,\ldots,m/d\}$. Let $H_{i_0}$ be the stabilizer of 
$(i_0,j), j=1,\ldots,d$ in $N_h$. By Lemma \ref{lema:balanced}~(3), the elements $(i,j), j=1,\ldots,d$ with $i\neq i_0$ are moved 
transitively by $H_{i_0}$. Therefore,
\[
\begin{split}
0=&\sum_{\tau\in H_{i_0}} \sum_{i,j} n_{\tau(i,j)} g(z_{i,j}(t))\\
= & |H_{i_0}| \sum_{j=1}^d n_{i_0,j} g(z_{i_0,j}(t)) + 
\frac{|H_{i_0}|}{d-1} \sum_{i=1, i \neq i_0}^{m/d} \left( \sum_{j=1}^d n_{i,j} \right) \left( \sum_{j=1}^d g(z_{i,j}(t)) \right). 
\end{split}
\]
Let us observe that $ \sum_{j=1}^d g(z_{i,j}(t)) $ is invariant by $G_{B_i} = \{ \sigma \in G_f: \sigma(B_i) =B_i \}$, for 
$i \neq i_0$. As a consequence, if we denote $L=\C(z_1(t),\ldots,z_m(t))$, we obtain that $\sum_{j=1}^d g(z_{i,j}(t)) 
\in L^{G_{B_i}} = \C(h(z_{i,j}(t))) = \C(w_i(t))$, where $w_i(t) = h(z_{i,j}(t))$ for $j=1,\ldots,d$. 
By Lüroth Theorem, there exists a polynomial $\tilde{g} (w)$ (which arguing as in the previous proof does not deppend on $i$) such that 
\[
\sum_{j=1}^d g(z_{i,j}(t)) = \tilde{g} (w_i(t))
\]
and, consequently,
\[
\sum_{j=1}^{d} n_{i_0,j} g(z_{i_0,j}(t))=\frac{-1}{d-1}
\sum_{i=1, i \neq i_0}^{m/d} \left( \sum_{j=1}^d n_{i,j} \right) \tilde{g}(w_{i}(t)).
\] 
Replacing in the equation $\int_{C(t)}g=0$,
\[
\begin{split}
0=\sum_{i=1}^{m/d} \sum_{j=1}^d n_{i,j} g(z_{i,j}(t))=&
\frac{-1}{d-1}\sum_{i_0=1}^{m/d}\sum_{i=1, i \neq i_0}^{m/d} \left( \sum_{j=1}^d n_{i,j} \right) \tilde{g}(w_{i}(t))\\
=& - \frac{\frac{m}{d}-1}{d-1} \sum_{i=1}^{m/d} \left( \sum_{j=1}^d n_{i,j} \right) \tilde{g}(w_{i}(t)).
\end{split}
\]
Thus,
\begin{equation}\label{tildeg2}
\int_{h(C(t))}\tilde{g}=0.
\end{equation}

On the other hand,
\[
\begin{split}
\sum_{j=1}^{d} n_{i_0,j} g(z_{i_0,j}(t))&=\frac{-1}{d-1}
\sum_{i=1, i \neq i_0}^{m/d} \left( \sum_{j=1}^d n_{i,j} \right) \tilde{g}(w_{i}(t))\\
&=\frac{1}{d-1} \left( \sum_{j=1}^d n_{i_0,j} \right) \tilde{g}(w_{i_0}(t)).
\end{split}
\] 
In consequence, putting

\begin{equation}\label{pi}
p_{i_0}(w):=\frac{\sum_{j \in B_{i_0}} n_j }{d-1} \tilde{g}(w)\in \C[w]
\end{equation}
for every $i_0$, $g$ is a solution of 
\begin{equation}\label{g0}
\int_{\tilde C_{i_0}(w)} g =p_{i_0}(w), \quad 1\leq i_0\leq m/d.
\end{equation}

\begin{comment}
if and only if there exists $\tilde g$ such that 
\[
\int_{h(C(t))} \tilde g\equiv 0,\quad\int_{C_k(t)} g=\frac{\sum_{i\in B_k} n_i}{d-1}\tilde g(w_k(t)), \quad 
1\leq k\leq m/d, 
\]
where $C_k(t)=\sum_{i\in B_k} n_i z_i(t)$ is balanced, $\mathcal{B}_h=\{B_1,\ldots,B_{m/d}\}$ and $w_k(t)=h(z_i(t))$ 
for every $i\in B_k$.

Observe that 
\[
\int_{C_k(t)} g=\frac{\sum_{i\in B_k} n_i}{d-1}\tilde g(w_k(t)), \quad 1\leq k\leq m/d\]
is a polynomial and $C_k(t)$ 
\end{comment}
Recall that $\tilde C_k(w)$ is a balanced chain of $h$ and the right-hand part of \eqref{g0} is a polynomial.
Then, by Proposition~\ref{prop:cadena} a particular solution of \eqref{g0} is given by 
\[
\frac{(p_{i_0} \circ h) (z)}{\sum_{i \in B_{i_0}} n_i} =  \frac{\tilde g(h(z))}{d-1},
\]
and the general solution is given by adding general solutions of $\int_{\tilde C_{i_0}(w)}u=0$.

\end{proof}

\end{document}